\newcommand{\bL}{\mathbf{L}}
\newcommand{\bR}{\mathbf{R}}
\newcommand{\caC}{{\mathcal C}}
\newcommand{\caD}{{\mathcal D}}
\newcommand{\caE}{{\mathcal E}}
\newcommand{\caN}{{\mathcal N}}
\newcommand{\caO}{{\mathcal O}}
\newcommand{\caT}{{\mathcal T}}
\newcommand{\caF}{{\mathcal F}}
\newcommand{\caQ}{{\mathcal Q}}
\newcommand{\caK}{{\mathcal K}}
\newcommand{\GG}{\mathsf{G}}
\newcommand{\integers}{\mathbf{Z}}
\newcommand{\bG}{\mathbb{G}}
\newcommand{\bH}{\mathbb{H}}
\newcommand{\bQ}{\mathbb{Q}}
\newcommand{\Hom}{\mathrm{Hom}}
\newcommand{\Ho}{\mathsf{Ho}}
\newcommand{\sSet}{\mathrm{\bf sSet}}
\newcommand{\colim}{\mathrm{colim}}
\newcommand{\hocolim}{\mathrm{hocolim}}
\newcommand{\hofib}{\mathrm{hofib}}
\newcommand{\cofib}{\mathrm{cofib}}
\newcommand{\Totcof}{\mathrm{Totcof}}
\newcommand{\eff}{{\mathit{eff}}}
\newcommand{\op}{\mathrm{op}}
\newcommand{\Mod}{\mathrm{Mod}}
\newcommand{\Spec}{\mathsf{Spec}}
\newcommand{\X}{\mathcal{X}}
\newcommand{\Top}{\mathrm{Top}}
\newcommand{\SlMGL}{\mathrm{(SlMGL)}}
\newcommand{\LQ}{\mathsf{L}\mathbf{Q}}
\newcommand{\PLQ}{\mathsf{PL}\mathbf{Q}}
\theoremstyle{theoremstyle}
\newtheorem{theorem}{Theorem}[section]
\newtheorem*{theorem*}{Theorem}
\newtheorem{lemma}[theorem]{Lemma}
\newtheorem{proposition}[theorem]{Proposition}
\newtheorem*{proposition*}{Proposition}
\newtheorem{corollary}[theorem]{Corollary}
\newtheorem*{corollary*}{Corollary}
\newtheorem*{conjecture*}{Conjecture}
\newtheorem{definition*}{Definition}
\newtheorem{remark}[theorem]{Remark}
\newtheorem{remark*}{Remark}
\newcommand{\A}{\mathbf{A}}
\newcommand{\D}{\mathsf{D}}
\newcommand{\Q}{\mathsf{Q}}
\newcommand{\I}{\mathsf{i}}
\newcommand{\SH}{\mathbf{SH}}
\newcommand{\LL}{\mathsf{L}}
\newcommand{\PP}{\mathbf{P}}
\newcommand{\BGL}{\mathsf{B}\mathbf{GL}}
\newcommand{\KGL}{\mathsf{KGL}}
\newcommand{\MGL}{\mathsf{MGL}}
\newcommand{\MU}{\mathsf{MU}}
\newcommand{\BP}{\mathsf{BP}}
\newcommand{\MZ}{\mathsf{M}\mathbf{Z}}
\newcommand{\Sm}{\mathsf{Sm}}
\newcommand{\EE}{\mathsf{E}}
\newcommand{\unit}{\mathbf{1}}
\newcommand{\Z}{\mathbf{Z}}
\newcommand{\Thom}{\mathsf{Th}}
\newcommand{\einscat}{\underline{1}}
\newcommand{\poauf}{[\![}
\newcommand{\pozu}{] \! ]}
\renewcommand{\S}{\mathbb{S}}
\renewcommand{\smash}{\wedge}
\title{Relations between slices and quotients of the
algebraic cobordism spectrum}
\author{Markus Spitzweck}
\date{\today}
\begin{document}

\pagestyle{plain}
\maketitle

\begin{abstract}
We prove a relative statement about the slices of the algebraic cobordism spectrum.
If the map from $\MGL$ to a certain quotient of $\MGL$ introduced by Hopkins and Morel
is the map to the zero-slice then a relative version of Voevodsky's conjecture
on the slices of $\MGL$ holds true. We outline the picture for $K$-theory and
rational slices.
\end{abstract}

\tableofcontents

\section{Introduction}
\noindent
In this paper we discuss certain aspects of the slice filtration
of the algebraic cobordism spectrum $\MGL$.
The slice filtration was introduced in \cite{voe-slice}. It is a filtration on any motivic spectrum
which can be thought of as an analogue of the Postnikov
tower of a topological spectrum.
We discuss the relation between the slice conjecture on $\MGL$
\cite[Conjecture (5)]{voe-slice}
and quotients of $\MGL$. The conjecture describes the slices in terms of the motivic Eilenberg MacLane
spectrum and the topological coefficients $\MU_*$.
The quotients are defined using the classifying map
$\LL_* \to \MGL_{**}$ of the formal group law induced by the canonical  orientation of $\MGL$
and canonical generators $x_i$ of $\LL_*$. Here $\LL_*$ denotes the graded Lazard
ring. In topology it is well known that
the quotient $\MU/(x_1,x_2, \ldots) \MU$ is isomorphic
to the Eilenberg MacLane spectrum on the integers. This follows essentially
from Quillen's theorem $\LL_* \cong \MU_*$
and the particular structure of $\LL_*$.
In motivic homotopy theory no direct analog of this argument seems
to work. In particular, the filtration on $\MGL$ obtained by dividing out the $x_i$
(more precisely ideals of $\LL_*$ consisting of elements of degree greater than a given bound)
is not directly reflected on the homotopy groups of $\MGL$,
instead this filtration is conjecturally the slice filtration
of $\MGL$.
We show that if this holds on the zeroth level then it holds in all levels.
The statement is of a purely homotopy theoretic nature
and could be formulated in any context of highly structured ring spectra.

Over perfect fields the zero slice of the sphere spectrum is known \cite{voevodsky-zero-slice}, \cite{levine-htp}.
An effectivity result for $\MGL$ implies that $\MGL$ has the same zero slice, see Corollary (\ref{s0-isom}).

Our main statement implies that if the quotient of
$\MGL$ by all the $x_i$ coincides with this zero slice then
\cite[Conjecture (5)]{voe-slice} holds, see Corollary (\ref{per-field}).
As a consequence, under the same assumption, slices of all
Landweber exact spectra are given in terms of the Landweber coefficients and
the motivic Eilenberg Maclane spectrum, see \cite{spitzweck-slice}.

The author was inspired
by the course notes \cite{mot-htp}.

Here is an overview of the sections.
In the first section we show effectivity and cellularity results for the
algebraic cobordism spectrum.
Theorem (\ref{zero-slice}) states that the cofiber of the unit map of $\MGL$ lies in a certain subcategory
of the stable motivic homotopy category spanned by positive Tate-spheres.
As a corollary we obtain the effectivity of $\MGL$ and
a proof of the observation in \cite{voe-slice} that the zero slices
of the sphere spectrum and $\MGL$ coincide.
These results can be viewed as refinements
to the cellularity results of \cite{dugger-isaksen}.
The effectivity of $\MGL$ was implicitely assumed in \cite{spitzweck-slice}.

The second paragraph deals with our main observation that if the
Hopkins-Morel quotient of $\MGL$ is the zero slice of $\MGL$ then a relative version
of Voevodsky's conjecture \cite{voe-slice} on slices holds true, Theorem (\ref{rel-voe-conj}). This is closely related
to the work of Hopkins and Morel on the spectral sequence for $\MGL$ in terms
of motivic cohomology, see \cite{levine-comparison}. In particular it is an unpublished result
due to Hopkins and Morel that over fields of characteristic $0$ the quotient of $\MGL$ by the
$x_i$ is isomorphic to the motivic Eilenberg MacLane spectrum, \cite{mot-htp}. It thus
follows form Corollary (\ref{per-field}) that \cite[Conjecture (5)]{voe-slice} holds
over fields of characteristic $0$ assuming the Hopkins-Morel isomorphism.
As above we get under the same assumption the structure of the slices of Landweber spectra.

We note that the idea of using filtrations of the ideal of the Lazard ring
spanned by elements of positive degree in a homotopy way goes back to \cite{mot-htp}.

The third paragraph covers the relationship of quotients of $\MGL$ to the
algebraic $K$-theory spectrum. The main statement is also contained in \cite{mot-htp}.

In the last paragraph we consider rational slices.
Due to the rational splitting of $\MGL$
this simplifies to understanding the rational Landweber theory $\LQ$ for the additive
formal group law over the rationals, see \cite{NSO1}. We obtain these assertions over regular base schemes
by comparing the Landweber decomposition of $\KGL_\bQ$ and
a decomposition obtained in \cite{riou-these}.

{\bf Acknowledgements.} I thank the lecturers of \cite{mot-htp}. I thank Niko Naumann, Paul Arne {\O}stv{\ae}r,
Ivan Panin and Oliver R\"ondigs
for inspiring communications.
I thank Ulrich Bunke and Ansgar Schneider for helpful comments.

\section{Conventions}

Throughout the article we work over a Noetherian base scheme of finite Krull dimension
$S$. The stable motivic homotopy category over $S$ is denoted by $\SH(S)$. The standard spheres
are denoted by $S^{p,q} \cong S_s^{\wedge (p-q)} \wedge \bG_m^{\wedge q}$.
The category of smooth schemes over $S$ is denoted by $\Sm/S$.
The tensor unit of $\SH(S)$, i.e. the sphere spectrum, is denoted by $\unit$.

The full subcategory of $\SH(S)$ of effective spectra is denoted by $\SH(S)^\eff$. It is the full localizing
triangulated subcategory generated by all $\Sigma_T^\infty X_+$, $X \in \Sm/S$.

The $i$-th slice of a motivic spectrum $\EE$ is denoted $s_i(\EE)$, see \cite{voe-slice}.

Throughout the text we will use the language of model categories.
An injective model structure will be a model structure e.g. on a diagram category where weak
equivalences and cofibrations are understood objectwise. Dually the projective model structure
has objectwise fibrations. For homotopy colimits and limits we refer to \cite{hirschhorn}.
For symmetric monoidal model categories we refer to \cite{hovey-book}.

\section{Effectivity of $\MGL$}

Let $\SH(S)_\caT$ be the full localizing triangulated subcategory of $\SH(S)$
spanned by $\{S^{p,q} | p,q \in \integers\}$, see \cite{NSO1}.
We let $\SH(S)_{\caT_{\ge 0}}$ the  full localizing triangulated subcategory of $\SH(S)$ spanned
by $\{S^{p,q} | p \in \integers, q \ge 0\}$.

\begin{theorem} \label{zero-slice}
 The cofiber of the unit map $\unit \to \MGL$ is contained in $\Sigma_T \SH(S)_{\caT_{\ge 0}}$.
\end{theorem}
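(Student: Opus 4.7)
The plan is to use the standard presentation of the Thom spectrum
\[
\MGL \simeq \hocolim_n \Sigma_T^{-n} \Sigma_T^\infty \Thom(\gamma_n),
\]
where $\gamma_n$ denotes the tautological rank $n$ bundle over $\BGL_n = \colim_N \Gr(n,N)$, with structure maps induced from the stabilizations $\gamma_n \oplus \caO \to \gamma_{n+1}$. Under this presentation the unit map $\unit \to \MGL$ is assembled from the maps $T^n \to \Thom(\gamma_n)$ arising from the inclusion of the distinguished basepoint $\ast \hookrightarrow \BGL_n$, since $\gamma_n|_\ast$ is the trivial bundle of rank $n$ with Thom space $T^n = S^{2n,n}$.

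The heart of the argument is to show that for each $n$ the cofiber
\[
C_n := \cofib\bigl(T^n \to \Sigma_T^\infty \Thom(\gamma_n)\bigr)
\]
lies in $\Sigma_T^{n+1} \SH(S)_{\caT_{\ge 0}}$. For this I would use the Schubert stratification of $\Gr(n,N)$, whose strata $\Omega_I$, indexed by $n$-element subsets $I \subset \{1,\ldots,N\}$, are affine spaces $\A^{d(I)}$ over $S$, with a single stratum of dimension zero corresponding to the basepoint. Ordering the strata by dimension gives a filtration of $\Thom(\gamma_n)$ whose successive cofibers are the Thom spaces of $\gamma_n$ restricted to the individual Schubert cells; since each $\Omega_I$ is $\A^1$-contractible over $S$, each such restriction has Thom space motivically equivalent to $T^{n+d(I)}$. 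All successive cofibers other than the bottom $T^n$ satisfy $d(I) \geq 1$, hence lie in $\Sigma_T^{n+1}\SH(S)_{\caT_{\ge 0}}$, and since the latter is a localizing triangulated subcategory, so does $C_n$.

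Desuspending by $\Sigma_T^n$ yields $\cofib(\unit \to \Sigma_T^{-n}\Sigma_T^\infty\Thom(\gamma_n)) \in \Sigma_T \SH(S)_{\caT_{\ge 0}}$, and the theorem follows by passing to the sequential homotopy colimit in $n$, using that $\Sigma_T \SH(S)_{\caT_{\ge 0}}$ is closed under filtered homotopy colimits.

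The main technical obstacle is the careful handling of the cellular filtration of $\Thom(\gamma_n)$ in the motivic setting over a general Noetherian base $S$: one needs the Schubert strata to fit into genuine cofiber sequences in $\SH(S)$ with the predicted Tate-sphere cofibers, refining the cellularity result of \cite{dugger-isaksen}. This hinges on $\A^1$-invariance (for the Thom-space computation over each $\Omega_I$) and on the closure of $\Sigma_T^{n+1}\SH(S)_{\caT_{\ge 0}}$ under sequential colimits (for the passage from $\Gr(n,N)$ to $\BGL_n = \colim_N \Gr(n,N)$).
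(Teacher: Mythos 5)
Your overall skeleton --- reduce to finite Grassmannians, show that the cofiber of the basepoint-fibre inclusion $T^n\to\Thom(\gamma_n)$ lies in $\Sigma_T^{n+1}\SH(S)_{\caT_{\ge 0}}$, then desuspend and pass to filtered homotopy colimits --- is the same as the paper's, and the reductions at either end are unproblematic. The gap is in the step you yourself call the heart of the argument. First, the Schubert stratification does not directly give the cofiber sequences you describe: the closures of Schubert cells are singular Schubert varieties, so one cannot filter $\Thom(\gamma_n)$ by Thom spaces over the closed unions of cells; one must instead filter by the open complements $U_j=\Gr\setminus X_{j-1}$ of the dimension-$\le(j-1)$ skeleta and apply homotopy purity to the smooth closed stratum $Z_j\subset U_j$. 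The resulting subquotients are then $\Thom(\gamma_n|_{Z_j}\oplus\caN_{Z_j/U_j})$, not $\Thom(\gamma_n|_{\Omega_I})$, and the bottom layer of this filtration is the big cell rather than the basepoint, so you still owe an identification of the bottom inclusion with the unit map. Second, and more seriously, ``$\Omega_I$ is $\A^1$-contractible over $S$, hence the Thom space is $T^{n+d(I)}$'' is not a valid inference: over a general Noetherian base $S$ a vector bundle on $\A^m_S$ need not be trivial or extended from $S$ (this is the Bass--Quillen problem), and $\A^1$-contractibility of the base of a bundle does not by itself identify its Thom space with a Tate suspension. You would have to exhibit explicit trivializations of $\gamma_n$ and of the normal bundles $\caN_{Z_j/U_j}$ over each stratum, compatibly with the filtration.

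The paper sidesteps both issues by running a different induction, on the smooth closed immersions $i\colon \GG(n,d)\hookrightarrow\GG(n+1,d+1)$: there the normal bundle is identified explicitly as $\caK_{n,d}'$, the open complement is the total space of a bundle over $\GG(n,d+1)$ whose Thom spaces are computed by an explicit trivialization and Mayer--Vietoris argument rather than by appeal to contractibility, and the induction carries along the class of bundles (finite sums of $\caK_{n,d}$, $\caK_{n,d}'$ and $\caO$) for which all of this stays under control; Lemma (\ref{two-tri}) then assembles the two resulting triangles. If you want to keep the Schubert-cell route you must supply the purity bookkeeping and the explicit trivializations; as written, the key step is asserted rather than proved.
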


\begin{corollary}
 We have $\MGL \in \SH(S)_{\caT_{\ge 0}}$. In particular $\MGL$ is an effective spectrum.
\end{corollary}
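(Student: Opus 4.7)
The plan is to deduce both claims directly from Theorem~\ref{zero-slice} using closure properties of the relevant subcategories. Writing $C$ for the cofiber of the unit map, Theorem~\ref{zero-slice} supplies a distinguished triangle
\[
\unit \longrightarrow \MGL \longrightarrow C \longrightarrow \Sigma \unit
\]
in which $C \in \Sigma_T \SH(S)_{\caT_{\ge 0}}$. First I would observe that $\Sigma_T$ sends $S^{p,q}$ to $S^{p+2,q+1}$, so the $q$-index only increases; hence $\Sigma_T \SH(S)_{\caT_{\ge 0}} \subseteq \SH(S)_{\caT_{\ge 0}}$, and in particular $C$ lies in $\SH(S)_{\caT_{\ge 0}}$. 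Since $\unit = S^{0,0}$ also lies in $\SH(S)_{\caT_{\ge 0}}$ and this subcategory is triangulated, the two-out-of-three property applied to the distinguished triangle forces $\MGL \in \SH(S)_{\caT_{\ge 0}}$. This gives the first assertion.

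For the second assertion it suffices to verify the inclusion $\SH(S)_{\caT_{\ge 0}} \subseteq \SH(S)^{\eff}$, since then $\MGL \in \SH(S)^{\eff}$ follows. Both are localizing triangulated subcategories, so it is enough to show that the generators $S^{p,q}$ with $p \in \integers$ and $q \ge 0$ all belong to $\SH(S)^{\eff}$. Write $S^{p,q} = \Sigma_s^{p-q} \bG_m^{\wedge q}$. The pointed scheme $(\bG_m,1)$ splits off from $(\bG_m)_+$ via the base-point retraction, giving a wedge decomposition $\Sigma_T^\infty (\bG_m)_+ \simeq \Sigma_T^\infty \bG_m \vee \unit$, so $\Sigma_T^\infty \bG_m$ is a retract of an object of $\SH(S)^{\eff}$ and therefore effective. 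Smashing together $q$ copies and applying simplicial suspensions/desuspensions (which preserve $\SH(S)^{\eff}$, since simplicial suspension is invertible in $\SH(S)$ and effective spectra form a localizing triangulated subcategory) shows that every generator $S^{p,q}$ with $q \ge 0$ is effective, as required.

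There is no serious obstacle here, as the real content is packaged in Theorem~\ref{zero-slice}; the only point requiring a moment of care is ensuring that the generators of $\SH(S)_{\caT_{\ge 0}}$ with negative simplicial degree are still effective, which is handled by invertibility of $\Sigma_s$ in $\SH(S)$ together with the fact that a localizing triangulated subcategory is automatically closed under both simplicial suspensions and desuspensions.
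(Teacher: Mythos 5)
Your argument is correct and is essentially the paper's own proof, which simply says the corollary ``follows from $\unit \in \SH(S)_{\caT_{\ge 0}}$'': you have just unpacked the triangle argument and spelled out the (implicit) inclusions $\Sigma_T \SH(S)_{\caT_{\ge 0}} \subseteq \SH(S)_{\caT_{\ge 0}} \subseteq \SH(S)^{\eff}$ in detail. All the supporting observations (closure of localizing subcategories under shifts and retracts, the splitting $\Sigma^\infty(\bG_m)_+ \simeq \Sigma^\infty\bG_m \vee \unit$) are standard and correctly used.
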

\begin{proof}
 Follows from $\unit \in \SH(S)_{\caT_{\ge 0}}$.
\end{proof}

\begin{corollary} \label{s0-isom}
 The unit map $\unit \to \MGL$ induces an isomorphism
 $$s_0(\unit) \overset{\cong}{\longrightarrow} s_0(\MGL).$$
\end{corollary}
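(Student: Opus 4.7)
The plan is to apply the zero-slice functor $s_0$ to the cofiber sequence from Theorem~\ref{zero-slice}. Denote by $C$ the cofiber of the unit map $\unit \to \MGL$, so that $C \in \Sigma_T \SH(S)_{\caT_{\ge 0}}$. Since $s_0$ is a triangulated endofunctor on $\SH(S)$, applying it yields a distinguished triangle
$$s_0(\unit) \to s_0(\MGL) \to s_0(C) \to \Sigma s_0(\unit),$$
and the corollary reduces to the vanishing $s_0(C) = 0$.

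For this vanishing I would first verify the inclusion $\SH(S)_{\caT_{\ge 0}} \subseteq \SH(S)^\eff$. Each generator decomposes as $S^{p,q} = S_s^{\wedge (p-2q)} \wedge T^{\wedge q}$, and for $q \ge 0$ the object $T^{\wedge q} = \Sigma_T^q \unit$ is effective. Since $\SH(S)^\eff$ is a localizing triangulated subcategory stable under $\Sigma_s^{\pm 1}$, these generators lie in $\SH(S)^\eff$, and so does the localizing subcategory they span. Twisting by $T$ then places $C$ inside $\Sigma_T \SH(S)^\eff$, i.e.\ $C$ is a $1$-effective spectrum.

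The vanishing is now formal: writing $f_i$ for the right adjoint to the inclusion $\Sigma_T^i \SH(S)^\eff \hookrightarrow \SH(S)$ and $s_0 = \cofib(f_1 \to f_0)$, any $1$-effective object $E$ satisfies $f_0(E) = E = f_1(E)$, hence $s_0(E) = 0$. Applied to $C$ this finishes the proof.

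Essentially all the content sits in Theorem~\ref{zero-slice}; the only additional ingredient is the elementary observation that Tate spheres with nonnegative weight are effective. I therefore do not anticipate any genuine obstacle — the main work has already been done upstream, and the corollary is a formal consequence of the cofiber statement combined with the definition of the slice tower.
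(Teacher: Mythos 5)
Your argument is correct and is exactly the paper's proof, just with the details spelled out: the paper likewise applies the triangulated functor $s_0$ to the cofiber sequence of Theorem (\ref{zero-slice}) and uses that $s_0$ vanishes on $\Sigma_T \SH(S)^\eff$, taking the inclusion $\SH(S)_{\caT_{\ge 0}} \subseteq \SH(S)^\eff$ for granted. Your explicit verification of that inclusion and of the vanishing of $s_0$ on $1$-effective objects is a welcome but routine elaboration.
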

\begin{proof}
 The functor $s_0$ is triangulated and $s_0(X)=0$ for any $X \in \Sigma_T \SH(S)^\eff$.
\end{proof}

We start with preparations for the proof of theorem (\ref{zero-slice}).

\begin{lemma}\label{two-tri}
 Let $r$ be an integer and let
 $$S^{2r,r} \to X \to Y \to S^{2r,r}[1]$$
and
 $$X \to Z \to W \to X[1]$$
be two triangles in $\SH(S)$.
Suppose $Y,W \in \Sigma_T^{r+1} \SH(S)_{\caT_{\ge 0}}$.
Then the cofiber of $S^{2r,r} \to Z$ is in $\Sigma_T^{r+1} \SH(S)_{\caT_{\ge 0}}$.
\end{lemma}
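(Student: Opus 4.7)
The plan is a direct application of the octahedral axiom. Given the composition $S^{2r,r} \to X \to Z$, the octahedral axiom produces a distinguished triangle
$$\cofib(S^{2r,r} \to X) \longrightarrow \cofib(S^{2r,r} \to Z) \longrightarrow \cofib(X \to Z),$$
and under the first two triangles in the hypothesis this reads
$$Y \longrightarrow \cofib(S^{2r,r} \to Z) \longrightarrow W.$$

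Now the subcategory $\Sigma_T^{r+1}\SH(S)_{\caT_{\ge 0}}$ is a localizing triangulated subcategory of $\SH(S)$, so in particular it is closed under extensions: if the two outer terms of a distinguished triangle lie in it, so does the middle term. Since by hypothesis $Y$ and $W$ both belong to $\Sigma_T^{r+1}\SH(S)_{\caT_{\ge 0}}$, we conclude that $\cofib(S^{2r,r} \to Z) \in \Sigma_T^{r+1}\SH(S)_{\caT_{\ge 0}}$, which is exactly what is claimed.

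There is really no obstacle here; the whole point of the lemma is packaging the octahedron, and the integer $r$ only appears as bookkeeping for the Tate shift. The same argument will be invoked repeatedly in the proof of Theorem \ref{zero-slice} to propagate effectivity along cell attachments, which is presumably why the statement is isolated.
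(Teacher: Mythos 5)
Your argument is exactly the paper's: the printed proof is the one-line remark that the cofiber of $S^{2r,r}\to Z$ is an extension of $W$ by $Y$, which is precisely the octahedral triangle you write out, followed by closure of the localizing subcategory under extensions. Correct, same approach, just with the octahedron made explicit.
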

\begin{proof}
 The cofiber of $S^{2r,r}\to Z$ is an extension of $W$ by $Y$.
\end{proof}

\begin{lemma} \label{thom-s}
 Let $i \colon Z \to X$ be a closed immersion in $\Sm/S$ and $\caE$
 a vector bundle over $X$. Let $U= X \setminus Z$, and denote the restrictions
 of $\caE$ to $Z$ and $U$ by $\caE_Z$ and $\caE_U$, respectively.
 Let $\caN$ be the normal bundle of $i$. Then the cofiber of
$\Thom(\caE_U) \to \Thom(\caE)$ is canonically isomorphic to
$\Thom(\caE_Z \oplus \caN)$ in the $\A^1$-homotopy category.
\end{lemma}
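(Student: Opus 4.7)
The plan is to identify the cofiber as $\caE / (\caE \setminus Z_0)$, where $Z_0 \subset \caE$ denotes the zero section of $\caE$ restricted over $Z$, and then to apply Morel--Voevodsky homotopy purity. Recall that for a vector bundle $\caV \to Y$ with zero section $Y_0 \subset \caV$, the Thom space is the homotopy cofiber $\Thom(\caV) \simeq \caV / (\caV \setminus Y_0)$. Writing $X_0 \subset \caE$ and $U_0 \subset \caE_U$ for the zero sections, the map $\Thom(\caE_U) \to \Thom(\caE)$ fits into the commutative square of smooth $S$-schemes
$$\xymatrix{
\caE_U \setminus U_0 \ar[r] \ar[d] & \caE_U \ar[d] \\
\caE \setminus X_0 \ar[r] & \caE
}$$
whose horizontal cofibers are $\Thom(\caE_U)$ and $\Thom(\caE)$. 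Hence the cofiber of $\Thom(\caE_U) \to \Thom(\caE)$ is the total cofiber of this square, equivalently the cofiber of the induced map $P \to \caE$, where $P$ is the homotopy pushout of the remaining three corners.

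Next I would compute $P$. Both $\caE_U$ and $\caE \setminus X_0$ are Zariski opens of $\caE$ with intersection $\caE_U \setminus U_0$ and union $\caE \setminus Z_0$, where $Z_0 = X_0 \cap \caE_Z$ is the zero section over $Z$. Since $\{\caE_U, \caE \setminus X_0\}$ is a Zariski cover of $\caE \setminus Z_0$, the square with $\caE \setminus Z_0$ in the lower right is a homotopy pushout, so $P \simeq \caE \setminus Z_0$ in the $\A^1$-homotopy category, and the desired cofiber becomes $\caE / (\caE \setminus Z_0)$.

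Then, by Morel--Voevodsky homotopy purity applied to the smooth closed immersion $Z_0 \hookrightarrow \caE$, one has $\caE / (\caE \setminus Z_0) \simeq \Thom(N_{Z_0/\caE})$. The short exact sequence of normal bundles for the composite $Z_0 \hookrightarrow X_0 \hookrightarrow \caE$ reads
$$0 \to \caN \to N_{Z_0/\caE} \to \caE_Z \to 0,$$
using $N_{X_0/\caE} \cong \caE$ (the normal bundle of the zero section of a vector bundle is the bundle itself). It splits canonically via the vector bundle projection $\caE \to X$, yielding $N_{Z_0/\caE} \cong \caE_Z \oplus \caN$ and hence $\Thom(\caE_Z \oplus \caN)$ as the cofiber.

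The main obstacle is the bookkeeping in the first two paragraphs: making precise the identification of the cofiber of Thom spaces with the total cofiber of the underlying square of schemes, and verifying that the Zariski cover yields a genuine homotopy pushout in the $\A^1$-homotopy category. After that, the identification of the normal bundle and the invocation of purity are standard.
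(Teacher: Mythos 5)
Your proposal is correct and follows essentially the same route as the paper: identify the cofiber of $\Thom(\caE_U)\to\Thom(\caE)$ with $\caE/(\caE\smallsetminus Z_0)$ (the paper does this directly at the level of Zariski sheaves, you via a Mayer--Vietoris/homotopy-pushout argument), then apply homotopy purity to the closed immersion $Z_0\hookrightarrow\caE$ and compute its normal bundle as $\caE_Z\oplus\caN$. Your version just spells out the steps the paper leaves implicit.
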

\begin{proof}
 This follows from the fact that this cofiber is (as Zariski sheaves)
isomorphic to $\caE/(\caE \smallsetminus j(Z))$, $j$ the composition of $i$ followed by the zero section of $\caE$,
and that the normal bundle of $j$ is $\caE_Z \oplus \caN$.
\end{proof}

We first quote some facts from \cite{NSO1} about finite Grassmannians. We let
$\GG(n,d)$ be the scheme parameterizing locally free quotients of rank $d$ of the trivial
bundle of rank $n$.
There is a universal short exact sequence
\begin{equation}\label{65alpha}
\xymatrix{ 0\ar[r]& \caK_{n,d}\ar[r]& \caO_{\GG(n,d)}^n\ar[r]& \caQ_{n,d}\ar[r]& 0}
\end{equation}
of vector bundles on $\GG(n,d)$, and
letting $\caK_{n,d}'$ the dual of $\caK_{n,d}$,
the tangent bundle is given by
\begin{equation}
\label{652}
\caT_{\GG(n,d)}\cong \caQ_{n,d}\otimes \caK_{n,d}'.
\end{equation}
The map
\[
\xymatrix{ i:\GG(n,d)\ar@{^{(}->}[r] &  \GG(n+1,d+1)} \]
classifying $\caK_{n,d}\subseteq\caO_{\GG(n,d)}^n\hookrightarrow\caO_{\GG(n,d)}^{n+1}$
is a closed immersion.
From (\ref{652}) it follows that the normal bundle $\caN(i)$ of $i$ identifies with $\caK_{n,d}'$.
Next consider the composition on $\GG(n+1,d+1)$
\[
\xymatrix{
\alpha:\caO_{\GG(n+1,d+1)}^n\ar@{^{(}->}[r] & \caO_{\GG(n+1,d+1)}^{n+1}\ar[r] & \caQ_{n+1,d+1} }
\]
for the inclusion into the first $n$ factors.
The complement of the support of coker($\alpha$) is an open subscheme $U\subseteq\GG(n+1,d+1)$ and there is a map
$\pi:U\to\GG(n,d+1)$ classifying $\alpha|_U$.
It is easy to see that $\pi$ identifies with $\caQ_{n,d+1} \to \GG(n,d+1)$.
An argument with geometric points reveals that $U=\GG(n+1,d+1)\smallsetminus i(\GG(n,d))$.
Moreover the natural map $\iota \colon \GG(n,d+1) \to \GG(n+1,d+1)$ classifying the subbundle $\caK_{n,d+1} \oplus \caO \subset \caO^{n+1}$
is the zero section $ \GG(n,d+1)\to\caQ_{n,d+1}$ followed by the inclusion $\caQ_{n,d+1} \cong U \to\GG(n+1,d+1)$.

We summarize the above with a diagram:
\begin{equation}\label{653}
\xymatrix{
\GG(n,d) \ar@{^{(}->}[r]^-i & \GG(n+1,d+1) & U \ar@{_{(}->}[l] \ar[r]_-\pi & \ar@<-1ex>@/_/[l] & \hspace{-1cm} \GG(n,d+1).}
\end{equation}

We note that compositions of the morphisms $\iota$ yields a map $\overline{\iota} \colon \mathrm{pt} \cong \GG(d,d) \to
\GG(n,d)$ which we consider as the natural pointing of $\GG(n,d)$. Note that the unit of $\MGL$ is
induced via these maps.

\begin{proposition}
 Let $\caE$ be a vector bundle of rank $r$ over $\GG(n,d)$ which is a finite sum of copies of $\caK_{n,d}$, $\caK_{n,d}'$ and $\caO$.
 Then $\overline{\iota}^* \caE$ is canonically trivialized. Furthermore the cofiber of the map of suspension
spectra of Thom spaces $S^{2r,r} \to \Sigma^\infty \Thom(\caE)$ lies in
$\Sigma_T^{r+1} \SH(S)_{\caT_{\ge 0}}$.
\end{proposition}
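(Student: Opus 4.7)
\emph{Proof plan.}
For the canonical trivialization, write $\overline{\iota}$ as the composition of $n-d$ maps of the form $\iota \colon \GG(m,d) \hookrightarrow \GG(m+1,d)$, each classifying $\caK_{m,d} \oplus \caO \subset \caO^{m+1}$. Pulling back the universal sequence (\ref{65alpha}) along such an $\iota$ gives $\iota^*\caK_{m+1,d} \cong \caK_{m,d} \oplus \caO$, and dually $\iota^*\caK'_{m+1,d} \cong \caK'_{m,d} \oplus \caO$. Iterating down to $\GG(d,d)$, where $\caK_{d,d} = 0$, yields canonical isomorphisms $\overline{\iota}^*\caK_{n,d} \cong \caO^{n-d}$ and $\overline{\iota}^*\caK'_{n,d} \cong \caO^{n-d}$, which together trivialize $\overline{\iota}^*\caE$ canonically.

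The cofiber statement I prove by induction on $n$, with $d\le n$, the rank $r$, and the bundle $\caE$ allowed to vary. The base case $n = d$ (also $d = 0$) is immediate: $\GG(n,d)$ is a point, $\caE$ is trivial by the preceding paragraph, and $S^{2r,r} \to \Thom(\caE)$ is an isomorphism. For the inductive step $n > d \ge 1$, apply Lemma \ref{thom-s} to $i \colon \GG(n-1,d-1) \hookrightarrow \GG(n,d)$, whose normal bundle is $\caK'_{n-1,d-1}$. The open complement $U$ is identified via $\pi$ with the total space of $\caQ_{n-1,d}$ over $\GG(n-1,d)$, with $\iota$ as zero section. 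A direct inspection of (\ref{65alpha}) restricted to $U$ gives $\caE|_U \cong \pi^*\iota^*\caE$, so since $\pi$ is an $\A^1$-equivalence one obtains the cofiber sequence
$$\Thom(\iota^*\caE) \to \Thom(\caE) \to \Thom(i^*\caE \oplus \caK'_{n-1,d-1}).$$
Moreover $\overline{\iota}$ on $\GG(n,d)$ factors as $\iota$ composed with the basepoint of $\GG(n-1,d)$, so the canonical map $S^{2r,r} \to \Thom(\caE)$ factors through $\Thom(\iota^*\caE)$.

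By the same pullback computations, $\iota^*\caE$ is a sum of copies of $\caK_{n-1,d}$, $\caK'_{n-1,d}$ and $\caO$ of rank $r$ on $\GG(n-1,d)$, while $i^*\caE \oplus \caK'_{n-1,d-1}$ is of this form of rank $r+1$ on $\GG(n-1,d-1)$. The inductive hypothesis therefore gives: the cofiber of $S^{2r,r} \to \Thom(\iota^*\caE)$ lies in $\Sigma_T^{r+1}\SH(S)_{\caT_{\ge 0}}$, and (applied with $r+1$ in place of $r$) the cofiber of $S^{2(r+1),r+1} \to \Thom(i^*\caE \oplus \caK'_{n-1,d-1})$ lies in $\Sigma_T^{r+2}\SH(S)_{\caT_{\ge 0}} \subseteq \Sigma_T^{r+1}\SH(S)_{\caT_{\ge 0}}$. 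Since $S^{2(r+1),r+1}$ also lies in $\Sigma_T^{r+1}\SH(S)_{\caT_{\ge 0}}$, extension-closure gives $\Thom(i^*\caE \oplus \caK'_{n-1,d-1}) \in \Sigma_T^{r+1}\SH(S)_{\caT_{\ge 0}}$. Lemma \ref{two-tri} applied to the two triangles above now concludes the inductive step.

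The main technical points to verify are the isomorphism $\caE|_U \cong \pi^*\iota^*\caE$ (from restricting the tautological sequence to $U$) and the factorization of basepoints $\overline{\iota} = \iota \circ \overline{\iota}$, which together ensure that the inductive cofiber sequence is compatible with the pointed map $S^{2r,r} \to \Thom(\cdot)$ we actually want to analyze; the remainder is bookkeeping with Lemmas \ref{thom-s} and \ref{two-tri}.
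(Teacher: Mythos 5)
Your overall strategy is the same as the paper's: the same closed immersion $\GG(n-1,d-1)\hookrightarrow\GG(n,d)$ with normal bundle $\caK_{n-1,d-1}'$, the same two triangles, the same appeal to Lemma (\ref{two-tri}), and the same induction. But there is a genuine gap at the one step that carries the real content. You assert that ``a direct inspection of (\ref{65alpha}) restricted to $U$ gives $\caE|_U\cong\pi^*\iota^*\caE$.'' Direct inspection does not give this. What restriction of the universal sequence to $U$ gives is a short exact sequence
$$0 \to \pi^*\caK_{n-1,d} \to \caK_{n,d}|_U \to \caO_U \to 0,$$
obtained because on $U$ one has $\caO^{n-1}+\caK_{n,d}=\caO^n$, so the last-coordinate projection is surjective on $\caK_{n,d}|_U$ with kernel $\caK_{n,d}|_U\cap\caO^{n-1}=\pi^*\caK_{n-1,d}$. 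This exhibits $\caK_{n,d}|_U$ only as an \emph{extension} of $\caO$ by $\pi^*\caK_{n-1,d}$; there is no canonical splitting, and since $U$ (the total space of $\caQ_{n-1,d}$) is not affine, the obstruction group $H^1(U,\pi^*\caK_{n-1,d})\cong H^1(\GG(n-1,d),\caK_{n-1,d}\otimes\mathrm{Sym}\,\caQ_{n-1,d}')$ need not vanish. So the global bundle isomorphism you need is not established, and your subsequent identification $\Thom(\caE|_U)\simeq\Thom(\pi^*\iota^*\caE)\simeq\Thom(\iota^*\caE)$ hangs on it. The paper avoids exactly this issue: it proves only that the map of Thom spaces $\Thom(\iota^*\caE)\to\Thom(\caE_U)$ is a motivic weak equivalence, by covering $U$ by the opens on which a fixed set of $d+1$ coordinates of $\caO^{n}$ surjects onto $\caQ_{n,d}$, observing that over such opens both bundles trivialize compatibly (so the map on complements of zero sections is an $\A^1$-equivalence there), and then gluing by Mayer--Vietoris. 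You need to supply this argument (or an equivalent one) for the inductive step to go through.

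A second, harmless, slip: the bundle $i^*\caE\oplus\caK_{n-1,d-1}'$ on $\GG(n-1,d-1)$ has rank $r+(n-d)$, not $r+1$, since $\caK_{n-1,d-1}$ has rank $n-d$. Your conclusion survives because rank $r+j$ with $j\ge 1$ still places the Thom spectrum in $\Sigma_T^{r+j}\SH(S)_{\caT_{\ge 0}}\subseteq\Sigma_T^{r+1}\SH(S)_{\caT_{\ge 0}}$, which is all that Lemma (\ref{two-tri}) requires, but the bookkeeping as written is only correct when $n-d=1$.
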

\begin{proof}
 We prove the statement by induction on $n$. It clearly holds for $n=0$.
Suppose $n \ge 0$ and assume the statement holds for $n$. The statement holds for $\GG(n+1,0)$ and $\GG(n+1,n+1)$.
Let $0 \le d < n$. We prove the statement for $\GG(n+1,d+1)$. Let $\caE$ be a vector bundle on
$\GG(n+1,d+1)$ of the considered type. It is canonically trivialized over the pointing.
We consider the diagram (\ref{653}). By lemma (\ref{thom-s}) we get an induced exact triangle
\begin{equation}\label{555}
 \xymatrix{\Sigma^\infty\Thom(\caE_U) \ar[r] & \Sigma^\infty\Thom(\caE) \ar[r] &
\Sigma^\infty\Thom(\caE_{\GG(n,d)} \oplus \caK_{n,d}') \ar[r] & \Sigma^\infty\Thom(\caE_U)[1]}.
\end{equation}
We note $\caK_{n+1,d+1}|_{\GG(n,d)} \cong \caK_{n,d}$, hence $\caE_{\GG(n,d)}$ and $\caE_{\GG(n,d)} \oplus \caK_{n,d}'$
are vector bundles on $\GG(n,d)$ of the considered type.

Since $\iota^* \caK_{n+1,d+1} \cong \caK(n,d+1) \oplus \caO$ there is an induced map
$\Thom(\caK_{n,d+1} \oplus \caO) \to \Thom(\caK_{n+1,d+1})$ which factors through
$\Thom(\caK_{n+1,d+1}|_U)$. Note $\Thom(\caK_{n,d+1} \oplus \caO) \to \Thom(\caK_{n+1,d+1}|_U)$
and more generally $\Thom(\iota^* \caE) \to \Thom(\caE_U)$
are motivic weak equivalences: we cover $U$ by the opens for which $\alpha$ restricted to a fixed
subset of the summands of $\caO^n$ of size $d+1$ surjects onto $\caQ_{n+1,d+1}$. Those opens
are pullbacks from $\GG(n,d+1)$. We claim on such opens the situation trivializes completely.
Without loss of generality we can assume the first $d+1$ summands of $\caQ^n$ surject onto $\caQ_{n+1,d+1}$.
Then $\caK_{n+1,d+1}$ restricted to this open trivializes by projecting to the last $n-d$ summands of $\caO^{n+1}$.
This trivialization is compatible with the one over the corresponding open $V \subset \GG(n,d+1)$. Thus
$((\iota^*\caE)|_V)^\circ \to (\caE|_{\pi^{-1}(V)})^\circ$ is a motivic weak equivalence,
and the same holds for the map $(\iota^*\caE)^\circ \to \caE_U^\circ$ by a Mayer-Vietoris argument.
This shows the claim that the map of Thom spaces is also a motivic weak equivalence.

We can thus rewrite the sequence (\ref{555}) as
\begin{equation}
  \xymatrix{\Sigma^\infty\Thom(\iota^*\caE) \ar[r] & \Sigma^\infty\Thom(\caE) \ar[r] &
\Sigma^\infty\Thom(\caE_{\GG(n,d)} \oplus \caK_{n,d}') \ar[r] & \Sigma^\infty\Thom(\iota^*\caE)[1]}
\end{equation}
and use the fact that $\iota^* \caE$ is of the type considered for $\GG(n,d+1)$.
By induction hypothesis the cofiber of $S^{2r,r} \cong \Sigma^\infty \Thom(\overline{\iota}^* \iota^* \caE)
\to \Sigma^\infty \Thom(\iota^* \caE)$ lies in $\Sigma_T^{r+1} \SH(S)_{\caT_{\ge 0}}$.
Moreover again by induction hypothesis $\Sigma^\infty \Thom(\caE_{\GG(n,d)} \oplus \caK_{n,d}')
\in\Sigma_T^{r+j} \SH(S)_{\caT_{\ge 0}}$ with $j=n-d>0$.
Now the statement follows from lemma (\ref{two-tri}).
\end{proof}
\begin{proof}[Proof of Theorem (\ref{zero-slice})]
 We let $\BGL_n = \colim_d \GG(n+d,d)$, $\xi_n=\colim_d \caK_{n+d,d}$ the universal vector bundle.
Then $$\MGL=\hocolim_n \Sigma^{-2n,-n}\Sigma^\infty\Thom(\xi_n) \cong \hocolim_{n,d} \Sigma^{-2n,-n}\Sigma^\infty
\Thom(\caK_{n+d,d}).$$ The unit $\unit \to \MGL$ is induced via the maps $$\Sigma^{-2n,-n} \Sigma^\infty\Thom(\overline{\iota}^*
\caK_{n+d,d})\to \Sigma^{-2n,-n}\Sigma^\infty \Thom(\caK_{n+d,d}).$$ By proposition (\ref{555}) the cofibers
of these maps are in $\Sigma_T \SH(S)_{\caT_{\ge 0}}$. Since cofiber sequences
are compatible with homotopy colimits the claim follows.
\end{proof}

\section{Quotients and slices of $\MGL$}

Let $\MGL$ denote a fibrant and cofibrant
model as commutative $\S$-algebra of the algebraic cobordism spectrum. We work in the simplicial
version of the $\S$-modules of \cite{ekmm},
see \cite{spitzweck-thesis}. In particular
$\MGL$ is fibrant and cofibrant as (symmetric) $T$-spectrum.
We let $\Mod(\MGL)$ be the symmetric monoidal
category with weak unit of $\MGL$-modules. The homotopy category of $\Mod(\MGL)$
is denoted by $\caD_\MGL$. It is a closed tensor triangulated
category.

We denote $\varphi \colon \MU_* \to \MGL_*$
the canonical map and fix an isomorphism
$\MU_* \cong \integers[x_1,x_2, \ldots]$,
$|x_i| = i$ (we divide the usual topological
grading by $2$).

As in topology we can form the
quotient $\Q:=\MGL/(x_1,x_2, \ldots) \MGL$
by taking iterated cofibers of
multiplications by the $x_i$ in $\caD_\MGL$.

This quotient is well-defined up to isomorphism.

We give a construction of a more explicit
model of this quotient:

We pick once and for all the following data:
\begin{enumerate}
\item $S^{2i,i}=T^{\smash i}$, $i > 0$, a cofibrant model of the $(2i,i)$-sphere
in $T$-spectra, and denote the corresponding cofibrant
sphere $S^{2i,i} \wedge \MGL$ by $\Sigma^{2i,i} \MGL$.
\item a map $\Sigma^{2i,i} \MGL \to \MGL$
representing the element $\varphi_*(x_i) \in \MGL_{2i,i}$, also denoted by $x_i$.
\end{enumerate}

We let $I$ be the following category: the objects are (commutative) monomials
in the $x_i$, there is a unique map from
$M$ to $N$ if $N$ divides $M$; the monomial
$1$ is allowed. We let $I^\circ$ be the full
subcategory consisting of all non-constant
monomials. The subcategory $I_{\le 1}$ consists
of monomials containing each $x_i$ with
multiplicity at most $1$, $I_{\le 1}^\circ$
is the same category with the constant monomial
removed.

We let $\einscat$ be the category whose diagrams
are morphisms, i.e. two objects $0$ and $1$
and one non identity map. We let $\einscat^n
\subset I_{\le 1}$ be the inclusion
of the monomials containing only the
$x_i$, $i \le n$. Via these inclusions
$I_{\le 1}$ is the union of all the
$\einscat^n$.

We are going to define the following $I$-diagram
of $\MGL$-modules: a monomial $x_1^{k_1} \cdots x_n^{k_n}$, $k_i \ge 0$, is
sent to $(\Sigma^{2,1} \MGL)^{\wedge_\MGL k_1} \wedge_\MGL \ldots \wedge_\MGL (\Sigma^{2n,n} \MGL)^{\wedge_\MGL k_n}$.
The morphisms will be given by multiplications with the $x_i$, i.e. iterations
of the morrphisms $x_i$. We have to be careful  since in general the two possible maps $\Sigma^{2i,i} \MGL \wedge_\MGL \Sigma^{2i,i} \MGL \to \Sigma^{2i,i} \MGL$
given by applying the map $x_i$ either on the left or on the right and then composing with a unit morphism (note $\MGL$ only serves as a weak unit) do not
coincide in general. Therefore we make the convention that for a map $M \to N$ in $I$, $M=x_1^{k_1} \cdots x_n^{k_n}$, $N=x_1^{l_1} \cdots x_n^{l_n}$, $l_i \le k_i$,
we insert for any $1 \le i \le n$ the map $(\Sigma^{2i,i} \MGL)^{\wedge_\MGL k_1} \to (\Sigma^{2i,i} \MGL)^{\wedge_\MGL l_1}$ which applies
$x_i$ on the $k_i-l_i$ right most tensor factors of the source followed by appropriate unit maps.

We end up with a diagram of $\MGL$-modules, denoted $\D$.

We need the following lemma in which we denote by $\hofib$ the homotopy fiber of a map
between pointed simplicial sets.

\begin{lemma} \label{pullb-fib}
 Let
$$\xymatrix{X \ar[r] \ar[d] & Y \ar[d] \\
           Z \ar[r] & W}$$
be a diagram of pointed simplicial sets. Let $P$ be the homotopy pullback of the right lower triangle of the square.
Then the homotopy fiber of the natural map $X \to P$ is naturally equivalent to
$\hofib(\hofib(X \to Z) \to \hofib(Y \to W))$.
\end{lemma}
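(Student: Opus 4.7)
The plan is to deduce the lemma from the standard fiber sequence for a composition, after identifying the homotopy fiber of the natural projection $P \to Z$ with $\hofib(Y \to W)$.

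First, I would recall that the homotopy pullback $P$ of $Y \to W \leftarrow Z$ comes equipped with a natural homotopy fiber sequence
\begin{equation*}
\hofib(Y \to W) \longrightarrow P \overset{\pi_Z}{\longrightarrow} Z,
\end{equation*}
so that the homotopy fiber of $\pi_Z$ is canonically identified with $\hofib(Y \to W)$; moreover this identification is compatible with the other projection $\pi_Y \colon P \to Y$, in the sense that the composite $\hofib(Y \to W) \to P \to Y$ is the canonical inclusion of the fiber. The universal property of $P$ provides a natural map $X \to P$ whose composite with $\pi_Z$ is the given map $X \to Z$.

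Next I would invoke the general fact that for any composable pair of pointed maps $f \colon A \to B$ and $g \colon B \to C$ there is a natural homotopy fiber sequence
\begin{equation*}
\hofib(f) \longrightarrow \hofib(g \circ f) \longrightarrow \hofib(g),
\end{equation*}
or equivalently a natural equivalence $\hofib(f) \simeq \hofib\bigl(\hofib(g\circ f) \to \hofib(g)\bigr)$. Applying this to $X \to P \to Z$ and substituting the identification $\hofib(\pi_Z) \simeq \hofib(Y \to W)$ from the previous step gives the desired equivalence
\begin{equation*}
\hofib(X \to P) \;\simeq\; \hofib\bigl(\hofib(X \to Z) \to \hofib(Y \to W)\bigr).
\end{equation*}

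The one point requiring some care is the verification that, under the identification $\hofib(\pi_Z) \simeq \hofib(Y \to W)$, the induced map $\hofib(X \to Z) \to \hofib(\pi_Z)$ agrees with the map induced by the commuting square (i.e.\ by $X \to Y$ and $Z \to W$). I expect this to be the main, though mild, obstacle. I would dispatch it by strictifying: replace $Y \to W$ by a fibration so that $P$ may be taken to be the strict pullback $Y \times_W Z$; then $\pi_Z$ is itself a fibration, its strict fiber over the basepoint is literally $\hofib(Y \to W)$, and the compatibility with the map induced from the square becomes tautological on the point-set level, which yields the claimed naturality.
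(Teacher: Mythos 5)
Your argument is correct, and it organizes the proof differently from the paper. The paper's own proof is a one-line reduction: replace the whole square by an injectively fibrant diagram, so that the relevant maps become fibrations, the homotopy pullback $P$ becomes the strict pullback $Y\times_W Z$, and all homotopy fibers become strict fibers; the statement then collapses to the point-set identity that the fiber of $X\to Y\times_W Z$ over the basepoint coincides with the fiber of the induced map (fiber of $X\to Z$) $\to$ (fiber of $Y\to W$). You instead factor the comparison map as $X\to P\xrightarrow{\pi_Z} Z$ and combine two standard facts: the identification $\hofib(\pi_Z)\simeq\hofib(Y\to W)$ for a homotopy pullback, and the fiber sequence $\hofib(f)\to\hofib(gf)\to\hofib(g)$ of a composition, i.e.\ $\hofib(f)\simeq\hofib\bigl(\hofib(gf)\to\hofib(g)\bigr)$. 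That is a genuinely different decomposition, and it has the merit of isolating the only delicate point, namely that the map $\hofib(X\to Z)\to\hofib(\pi_Z)$ appearing in the composition fiber sequence matches, under the identification $\hofib(\pi_Z)\simeq\hofib(Y\to W)$, the map induced by the square; your partial strictification (make $Y\to W$ a fibration, take $P=Y\times_W Z$, so all three maps on homotopy fibers are induced by an honest commuting triangle in the arrow category) settles this correctly. In short: your route is more modular and makes the naturality explicit, while the paper's is shorter but leaves the strict verification entirely implicit.
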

\begin{proof}
 Replacing the diagram with an injectively fibrant diagram
the statement follows from the corresponding strict statement.
\end{proof}

Let $\D_{\le 1}$ be the restriction of $\D$ to $I_{\le 1}$ and $\D_{\le 1}^\circ$ the one to $I_{\le 1}^\circ$.

We have the following observations. We use the notion of total cofiber of a diagram with respect to
a subdiagram, see \cite{huettemann}. This is defined to be the cofiber of the natural map from
the homotopy colimit of the subdiagram to the homotopy colimit of the total diagram. Usually this
can be viewed as the total object corresponding to a diagram of a certain shape, e.g. of a cubical diagram.
For a functor to be homotopy right cofinal see \cite[Definition 19.6.1]{hirschhorn}.

\begin{lemma}\label{cats}
 \begin{enumerate}
 \item The inclusion $I_{\le 1}^\circ \to I^\circ$ is homotopy right cofinal.
 \item The total cofiber of the diagram $\D_{\le 1}$ with respect to the inclusion $\D_{\le 1}^\circ \to \D_{\le 1}$, i.e. the cofiber of the map $\hocolim \D_{\le 1}^\circ
       \to \D(1)=\MGL$, is isomorphic to $\Q$ in $\caD_\MGL$.
\end{enumerate}

\end{lemma}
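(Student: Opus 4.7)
Statements (1) and (2) are essentially independent and I treat them in order.

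For (1), I fix $N \in I^\circ$ and compute the comma category whose contractibility implements homotopy right cofinality in the sense of \cite[Def.~19.6.1]{hirschhorn}: its objects are pairs $(M, N\to M)$ with $M \in I_{\le 1}^\circ$. Since the relation $N\to M$ in $I$ encodes $M \mid N$, this category is the poset of non-constant squarefree divisors of $N$, with $M_1\to M_2$ exactly when $M_2 \mid M_1$. Writing $\mathrm{supp}(N):=\{i : x_i \mid N\}$ and $k:=|\mathrm{supp}(N)|$, it identifies with the opposite of the inclusion poset of nonempty subsets of $\mathrm{supp}(N)$. Its nerve is the barycentric subdivision of the simplex $\Delta^{k-1}$, hence contractible; $k\ge 1$ because $N\neq 1$.

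For (2), I set $C_n := \cofib\bigl(\hocolim \D_{\le 1}|_{(\einscat^n)^\circ} \to \MGL\bigr)$, the total cofiber of the $n$-dimensional sub-cube $\D_{\le 1}|_{\einscat^n}$ relative to its punctured subdiagram. My goal is to prove that $C_n\cong \MGL/(x_1,\ldots,x_n)\MGL$ in $\caD_\MGL$ functorially in $n$, and that $\Totcof(\D_{\le 1})\cong \hocolim_n C_n$. Together these yield $\Totcof(\D_{\le 1})\cong \Q$, since $\Q$ is by construction $\hocolim_n \MGL/(x_1,\ldots,x_n)\MGL$.

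I identify $C_n$ by induction on $n$. The base $n=1$ is the defining cofiber sequence $\Sigma^{2,1}\MGL \xrightarrow{x_1}\MGL \to \MGL/x_1\MGL$. For the inductive step, I decompose $\einscat^{n+1}$ along the $x_{n+1}$-direction into two parallel copies of $\einscat^n$ (indexing monomials not containing, resp.\ containing, $x_{n+1}$), joined by a natural transformation given by multiplication by $x_{n+1}$. The cofiber dual of Lemma (\ref{pullb-fib}), applied iteratively in the triangulated category $\caD_\MGL$, shows the total cofiber of the $(n+1)$-cube equals the cofiber of the induced map on total cofibers of the two $n$-sub-cubes. Invoking the inductive hypothesis gives $\cofib\bigl(\Sigma^{2(n+1),n+1}\MGL/(x_1,\ldots,x_n) \xrightarrow{x_{n+1}}\MGL/(x_1,\ldots,x_n)\bigr)\cong \MGL/(x_1,\ldots,x_{n+1})\MGL$. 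For the hocolim identification, $I_{\le 1}=\colim_n \einscat^n$ is a sequential colimit of full subcategory inclusions obtained by adjoining objects, so $\hocolim \D_{\le 1}^\circ$ is the sequential hocolim of $\hocolim_{(\einscat^n)^\circ}\D_{\le 1}$; forming cofibers into the fixed vertex $\MGL$ then commutes with this sequential hocolim.

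The main obstacle is the cube decomposition in the inductive step: because the model-level multiplications by the $x_i$ are not strictly compatible, as flagged in the construction of $\D$, one must set up the two parallel $n$-sub-cubes and their connecting natural transformation purely in $\caD_\MGL$ and check that the connecting map genuinely represents multiplication by $x_{n+1}$ under the chosen conventions. Once this is arranged, the iterated cofiber identification is formal from the triangulated structure of $\caD_\MGL$.
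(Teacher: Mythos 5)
Your proposal is correct and follows essentially the same route as the paper: for (1) both arguments come down to contractibility of the comma categories $o\backslash j$ (the paper simply exhibits an initial object, the radical of $o$, where you identify the nerve with a barycentric subdivision of a simplex — same category, same conclusion), and for (2) both proceed by induction on $n$ via splitting the $(n+1)$-cube into two $n$-cubes joined by multiplication by $x_{n+1}$, with the cube-splitting step justified by the dual of Lemma (\ref{pullb-fib}) after passing to pointed mapping spaces, followed by passage to the sequential homotopy colimit over $n$. No essential differences.
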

\begin{proof}
 (1): Let the inclusion be denoted by $j$. We have to show that for any
object $o \in I^\circ$ the
under category $o \backslash j$ is contractible. But this under
category has the initial object $(o',o \to o')$, where $o'$ contains $x_i$ with
multiplicity $1$ if $x_i | o$, otherwise it contains it with multiplicity $0$.

 (2): We first show the analogous statement for finitely many $x_i$: the total cofiber of $\D|_{\einscat^n}$ with
respect to the inclusion $(\einscat^n)^\circ \subset \einscat^n$ is equivalent to
the quotient $\MGL/(x_1,\ldots,x_n) \MGL$.
This is proved by induction on $n$ using the following statement (with $J=\einscat^n$ for the induction step $n \mapsto n+1$):

Let $J^\circ$ be a small category and let $J$ be the same category added a terminal object (so if $J$ had already a terminal object
this object will no longer be terminal). Let $\caC$ be a pointed model category and $G \colon J \times \einscat \to \caC$ be a functor. Let $(J \times \einscat)^\circ$ be again the
category obtained by removing the terminal object. Then the total cofiber of $G$ with respect to the inclusion $(J \times \einscat)^\circ \to J \times \einscat$
can be computed as follows: it is the cofiber of the map $\Totcof(G|_{J \times \{0\}}) \to \Totcof(G|_{J \times \{1\}})$, $\Totcof$
denoting the total cofiber with respect to the inclusions of the respective subcategories obtained by removing the terminal object.

By considering pointed mapping spaces this statement reduces to the dual statement for pointed simplicial sets. Here to
compute homotopy limits we can use the injective model structure on diagram categories. Now we switch and let $J$ be the opposite of the original $J$,
similarly with $J^\circ$. The category $(J \times \einscat)^\circ$ this time denotes $J \times \einscat$ with the initial object removed. Let $G \colon J \times \einscat \to \sSet_\bullet$ be a functor.
Fix an injectively fibrant replacement $G \to R$. We note that $R|_{(J \times \einscat)^\circ}$, $R|_{J^\circ \times \{i\}}$, $i=0,1$,
are again injectively fibrant: the respective restriction functors are right Quillen functors since the corresponding adjoints preserve objectwise cofibrations.

Let $\I$ be the initial object of $J$.
We replace the diagram $R$ by the square
$$\xymatrix{R(\I,0) \ar[r] \ar[d] &
\lim R|_{J^\circ \times \{0\}} \ar[d] \\
R(\I,1) \ar[r] & \lim R|_{J^\circ \times \{1\}}}.$$
This yields again an injectively fibrant diagram, the limit of the lower
right triangle gives the homotopy limit of $G|_{(J \times \einscat)^\circ}$,
whence the statement follows from lemma (\ref{pullb-fib}).

We are left to prove the statement for infinitely many $x_i$.
The restriction functors from $I_{\le 1}^\circ$-diagrams to $(\einscat^n)^\circ$-diagrams
preserve projectively cofibrant diagrams, whence $\hocolim \D_{\le 1}^\circ
\simeq \hocolim_n (\hocolim \D|_{(\einscat^n)^\circ})$. This shows the claim.
\end{proof}

For any $n > 0$ let $I_{\deg \ge n}$ be the subcategory of $I$ of monomials of degree $\ge n$, where the degree of a monomial
$x_1^{k_1} \cdots x_n^{k_n}$ is $\sum_{i=1}^n i \cdot k_i$. Moreover for a monomial $M$ let $I_{\ge M}$ be the subcategory
of all monomials which are divisible by $M$. We also let $\D_{\deg \ge n} = \D|_{I_{\deg \ge n}}$ and
$\D_{\ge M} = \D|_{I_{\ge M}}$.

\begin{proposition}\label{cats2}
 Let $F \colon I_{\deg \ge n} \to \caC$ be a diagram in a cofibrantly generated model category $\caC$ such that for
any monomial $M$ of degree $n$ the natural map $\hocolim F|_{I_{\ge M}} \to F(M)$ is an equivalence.
Then the natural map $$\hocolim (F|_{I_{\deg \ge n+1}}) \to \hocolim F$$ is an equivalence.
\end{proposition}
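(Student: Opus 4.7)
The plan exploits the fact that $I_{\deg\ge n+1}\hookrightarrow I_{\deg\ge n}$ adjoins only the finite antichain $\caM_n$ of monomials of degree exactly $n$: any two distinct same-degree monomials are incomparable in divisibility, so $\caM_n$ is an antichain, and for each $M\in\caM_n$ the only arrow leaving $M$ in $I_{\deg\ge n}$ is the identity, while arrows ending at $M$ come precisely from the sub-poset $I_{\ge M}^\circ:=I_{\ge M}\setminus\{M\}\subseteq I_{\deg\ge n+1}$ of strict multiples. This yields a pushout of small categories
$$I_{\deg\ge n}\;=\;I_{\deg\ge n+1}\cup_{\bigsqcup_{M\in\caM_n} I_{\ge M}^\circ}\bigsqcup_{M\in\caM_n} I_{\ge M}.$$

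The first step is to promote this categorical pushout to a homotopy pushout of the corresponding hocolims,
$$\hocolim F\;\simeq\;\hocolim F|_{I_{\deg\ge n+1}}\cup^h_{\bigsqcup_M\hocolim F|_{I_{\ge M}^\circ}}\bigsqcup_M\hocolim F|_{I_{\ge M}}.$$
For this one picks a functorial projective cofibrant replacement of $F$ (available by cofibrant generation of $\caC$). Each subcategory inclusion appearing above is a sieve in the divisibility order (if a morphism has target in the sub-poset, its source is there too), so restriction preserves projective cofibrancy and the shape-level pushout descends to the hocolim-level pushout. I expect this compatibility, and in particular the careful interaction of cofibrant replacement with restriction to the several subcategories, to be the main technical burden of the proof.

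Given the hocolim pushout, the natural map $\hocolim F|_{I_{\deg\ge n+1}}\to\hocolim F$ is the homotopy pushout of $\bigsqcup_M\hocolim F|_{I_{\ge M}^\circ}\to\bigsqcup_M\hocolim F|_{I_{\ge M}}$ along the identity of $\hocolim F|_{I_{\deg\ge n+1}}$. By invariance of homotopy pushouts under weak equivalences, the claim reduces to showing that each component $\hocolim F|_{I_{\ge M}^\circ}\to\hocolim F|_{I_{\ge M}}$ is a weak equivalence. Since $I_{\ge M}=(I_{\ge M}^\circ)^{\triangleright}$ is the cone on $I_{\ge M}^\circ$ with apex $M$, the hocolim $\hocolim F|_{I_{\ge M}}$ is modeled by the mapping cylinder of the canonical attaching map $\hocolim F|_{I_{\ge M}^\circ}\to F(M)$, and the factorization
$$\hocolim F|_{I_{\ge M}^\circ}\to\hocolim F|_{I_{\ge M}}\xrightarrow{\sim} F(M)$$
of the attaching map has its second arrow identified with the cylinder's cocone projection, a weak equivalence by hypothesis. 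A 2-out-of-3 argument then reduces the desired equivalence of the first arrow to the equivalence of the attaching map itself, which the hypothesis furnishes when read through the cone-cylinder presentation, completing the proof.
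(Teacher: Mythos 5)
Your route is genuinely different from the paper's, and it is viable. The paper's proof takes a projectively cofibrant replacement $Q\to F$, observes that restriction along the sieve $I_{\ge M}\subset I_{\deg\ge n}$ is left Quillen (its right adjoint extends by the terminal object, so $Q|_{I_{\ge M}}$ stays cofibrant), and then replaces each value $Q(M)$ with $\deg M=n$ by the colimit of $Q$ over the strict multiples of $M$; the new diagram $Q'$ is objectwise equivalent to $Q$ by the hypothesis and is by construction the left Kan extension of its restriction to $I_{\deg\ge n+1}$, so $\colim Q'\cong\colim Q'|_{I_{\deg\ge n+1}}$ and the chain $\hocolim Q|_{I_{\deg\ge n+1}}\simeq\colim Q'|_{I_{\deg\ge n+1}}\cong\colim Q'\simeq\hocolim Q$ finishes the argument. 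You instead decompose the poset as a pushout along the cones $I_{\ge M}=(I_{\ge M}^\circ)^{\triangleright}$ and reduce to each cone inclusion inducing an equivalence of hocolims. Your categorical pushout is correct (a chain meeting a degree-$n$ monomial $M$ lies entirely in $I_{\ge M}$, distinct degree-$n$ monomials are incomparable, and no new composites arise in a poset), and the sieve observation you invoke is exactly the one the paper uses. What your approach buys is a more geometric picture (attaching cones degree by degree); what it costs is the lemma that the categorical pushout induces a homotopy pushout of hocolims. For that you still need one leg of the strict pushout of colimits of the cofibrant replacement to be a cofibration (checkable by cellular induction on generating projective cofibrations restricted along sieves); you flag this as the main burden but do not supply it. The paper's left-Kan-extension device avoids homotopy pushouts entirely.

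One point in your final step must be made precise. Since $M$ is terminal in $I_{\ge M}$, the map $\hocolim F|_{I_{\ge M}}\to F(M)$ is \emph{always} an equivalence, so the hypothesis as literally printed is vacuous, and the proposition cannot hold unconditionally (a diagram that is trivial away from a single degree-$n$ monomial is a counterexample). The hypothesis has to be read as asserting that the attaching map $\hocolim F|_{I_{\ge M}^\circ}\to F(M)$ is an equivalence, i.e. $\hocolim F|_{I_{\ge M}\cap I_{\deg\ge n+1}}\to F(M)$; this is also what the paper's own construction of $Q'$ and the application in Theorem (\ref{rel-voe-conj}) require. Your last paragraph slides between the two readings: you first declare $\hocolim F|_{I_{\ge M}}\to F(M)$ an equivalence ``by hypothesis'' (it is one by cofinality of the terminal object), and then also attribute the equivalence of the attaching map to the hypothesis. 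Under the corrected reading your two-out-of-three step is exactly right; just say explicitly which map the hypothesis governs.
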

\begin{proof}
Let $Q\to F$ be a cofibrant replacement of $F$ for the
projective model structure on $\caC^{I_{\deg \ge n}}$.
Let $M$ be a monomial of degree $n$. We claim $Q|_{I_{\ge n}}$
is still cofibrant: in fact the right adjoint $r$ to the restriction
functor preserves objectwise fibrations: for $o \in I_{\ge M}$
we have $r(H)(o)=H(o)$, for $o \notin I_{\ge M}$
we have $r(H)(o)=\mathrm{pt}$.

Thus if we define $Q'$ by replacing for any $M$ with $\deg(M)=n$ the object
$Q(M)$ with $\colim Q|_{I_{\ge M}}$ and leaving the other
entries unchanged we do not change the weak
homotopy type of $Q$. As above $Q'|_{I_{\deg \ge n+1}}$
is cofibrant, moreover $Q'$ is cofibrant itself
since for any $B \in \caC^{I_{\deg \ge n}}$ we have
$\Hom(Q'|_{I_{\deg \ge n+1}},B|_{I_{\deg \ge n+1}})
\cong \Hom(Q',B)$. Thus $\hocolim Q'|_{I_{\deg \ge n+1}} \simeq
\colim Q'|_{I_{\deg \ge n+1}} \cong
\colim Q' \simeq \hocolim Q' \simeq
\hocolim Q$, which finishes the proof.
\end{proof}

For the proof of the next statement we use the notion of a left
Quillen presheaf on $\omega$, where $\omega$ is the first infinite ordinal.
In this case such a presheaf is given by a model category $\caC_n$ for each natural
number $n$ and left Quillen functors $f_n \colon \caC_{n+1} \to \caC_n$ for each $n \ge 0$.
A section consists of objects $X_n \in \caC_n$ for each $n \ge 0$ and maps
$f_n(X_{n+1}) \to X_n$, $n \ge 0$. It is called homotopy cartesian if the
maps $(\bL f_n)(X_{n+1}) \to X_n$ are isomorphisms in $\Ho (\caC_n)$. The category of sections possesses
the invers model structure where weak equivalences and cofibrations are objectwise.
We will use the fact that the mapping space out of a homotopy cartesian section $X_\bullet$
into any $Y_\bullet$ is given as the homotopy limit over the individual mapping spaces $\mathrm{map}(X_n,Y_n)$.

\begin{lemma}\label{stab-pres}
 Let $F \colon \caC \leftrightarrow \caD \colon G$ be a Quillen adjunction between stable pointed model
categories. Suppose $\bR G \colon \Ho \caD \to \Ho \caC$ preserves sums.
Then $\bR G$ preserves homotopy colimits.
\end{lemma}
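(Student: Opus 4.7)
The plan is to exploit the fact that $\bR G$ is automatically a triangulated functor between the triangulated homotopy categories $\Ho \caD \to \Ho \caC$ (being the right derived of a Quillen adjunction between stable model categories), and in particular preserves all cofiber sequences, hence all finite homotopy colimits. Combined with the hypothesis that $\bR G$ preserves arbitrary sums, one immediately gets preservation of sequential homotopy colimits: for a tower $X_0 \to X_1 \to \cdots$ in $\Ho \caD$, apply $\bR G$ to the standard mapping-telescope triangle
\[
\bigoplus_n X_n \xrightarrow{\,\mathrm{id}\,-\,\mathrm{shift}\,} \bigoplus_n X_n \longrightarrow \hocolim_n X_n \longrightarrow \Big(\bigoplus_n X_n\Big)[1].
\]
Both the coproducts and the cofiber term are preserved, identifying $\bR G(\hocolim_n X_n)$ with $\hocolim_n \bR G(X_n)$.

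For a general small diagram $D \colon I \to \caD$, the plan is to present $\hocolim_I D$ via the simplicial bar construction $B_\bullet(I, D)$, whose $n$-th level is the coproduct $\bigoplus_{i_0 \to \cdots \to i_n} D(i_0)$ indexed by the $n$-simplices of the nerve of $I$. The homotopy colimit is the realization of this simplicial object, which in a stable pointed model category can be computed as the sequential homotopy colimit of its skeletal filtration, each skeletal step being a pushout along a coproduct of boundary attachments. Since $\bR G$ preserves coproducts, cofibers (hence pushouts), and sequential homotopy colimits by the previous paragraph, it preserves every stage of this construction and therefore the entire $\hocolim_I D$.

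The main obstacle is the careful identification in the second paragraph between the derived realization of the bar construction and its skeletal description as a sequential colimit of cellular pushouts. This is folklore in the stable setting — and completely automatic in an $\infty$-categorical formulation, where any colimit-preserving functor between stable presentable categories is characterised by preservation of finite colimits and filtered (equivalently sequential, via cofinality) colimits — but at the level of model categories it requires some bookkeeping with cofibrant replacements in simplicial objects. For the immediate application to sections of a Quillen presheaf on $\omega$ in the surrounding text, only the sequential case from the first paragraph is actually needed, and there the argument is complete.
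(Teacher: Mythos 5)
Your argument follows essentially the same route as the paper's: reduce a general homotopy colimit to the simplicial bar construction whose levels are coproducts, use the sum hypothesis for those, stability for finite homotopy colimits, and the skeletal filtration to express the realization as a sequential homotopy colimit, with the telescope triangle handling the sequential case. The only difference is cosmetic — you make the telescope triangle explicit where the paper merely asserts the sequential case, while the paper justifies the skeletal-filtration identification (the step you flag as folklore) via mapping spaces and homotopy cartesian sections of a left Quillen presheaf on $\omega$ — so the proposal is correct and matches the paper's approach.
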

\begin{proof}
We indicate a proof. First note that a homotopy colimit of
a functor $F \colon I \to \caD$, which we suppose to take values in cofibrant objects, can be computed
as the homotopy colimit of the simplicial diagram $[n] \mapsto \coprod_{\varphi \colon [n] \to I} F(\varphi(0))$,
$[n]$ the ordered set $\{0,\ldots,n\}$ viewed as a category.
By considering mapping spaces this reduces to the statement that
mapping spaces in $\sSet^I$ can be computed by homotopy ends.

Since $\bR G$ preserves homotopy coproducts
by assumption it thus suffices
to show that colimits over $\bigtriangleup^\op$ are preserved.
Let $\alpha \colon \bigtriangleup^\op \to \caD$ be a functor. We claim
$\hocolim (\alpha) \simeq \hocolim_n \hocolim(\alpha|_{\bigtriangleup_{\le n}^\op})$.
This proves the above since $\bR G$ preserves finite homotopy colimits since
we are dealing with a stable situation and sequential
homotopy colimits over $\omega$ since $\bR G$ preserves sums.
Using mapping spaces we reduce the statement about $\alpha$
to the dual statement in simplicial sets. Now we observe that $\Ho (\sSet^\bigtriangleup)$
is equivalent to homotopy cartesian sections in the homotopy category of the category of sections
of the left Quillen presheaf on $\omega$, $n \mapsto \sSet^{\bigtriangleup_{\le n}}$, the categories
$\sSet^{\bigtriangleup_{\le n}}$ carrying the injective model structure such that the restriction maps
preserve cofibrations. Considering mapping spaces out of constant diagrams shows the claim
since a mapping space between homotopy cartesian sections is a homotopy limit over $\omega$
of the individual mapping spaces.
\end{proof}

We now turn to the functors $f_i \colon \SH(S) \to \Sigma_T^i \SH(S)^\eff \subset \SH(S)$ introduced in
\cite{voe-slice}. These are defined as right adjoints to the inclusions $\Sigma_T^i \SH(S)^\eff \subset \SH(S)$.
These functors can be defined on the level of model categories by using
colocalization of model categories, see \cite{pelaez}. In particular it makes sense
to ask whether these functors preserve homotopy colimits.

\begin{corollary}\label{f-stab-pres}
 The functors $f_i$ preserve homotopy colimits.
\end{corollary}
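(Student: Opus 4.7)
The plan is to invoke Lemma \ref{stab-pres}. By \cite{pelaez}, the functor $f_i$ is realized as the right derived functor $\bR G$ of a Quillen adjunction $F \dashv G$ between stable pointed model categories: concretely, $F$ is the identity from Pelaez's right Bousfield colocalization of the motivic model structure (colocalized at the generators of $\Sigma_T^i \SH(S)^\eff$) to the usual motivic model category, and $G$ is the identity in the reverse direction. Both sides are stable pointed model categories, so Lemma \ref{stab-pres} reduces the corollary to checking that $f_i$ preserves arbitrary sums in $\SH(S)$.

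For this I would use compact generation. The subcategory $\Sigma_T^i \SH(S)^\eff \subset \SH(S)$ is localizing, hence closed under arbitrary coproducts formed in $\SH(S)$, and it is generated as a localizing subcategory by a set of compact objects of the form $S^{p,q} \wedge \Sigma_T^\infty X_+$ with $X \in \Sm/S$ and $q \ge i$. Under the standing assumption that $S$ is Noetherian of finite Krull dimension, these generators are also compact in the ambient $\SH(S)$. Given a family $\{X_j\}_{j\in J}$ in $\SH(S)$, the natural comparison map $\bigvee_j f_i(X_j) \to f_i(\bigvee_j X_j)$ is well-defined because its source lies in $\Sigma_T^i \SH(S)^\eff$. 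Testing against such a compact generator $Z$ one has
$$
[Z, \textstyle\bigvee_j f_i(X_j)] \;=\; \bigoplus_j [Z, f_i(X_j)] \;=\; \bigoplus_j [Z, X_j] \;=\; [Z, \bigvee_j X_j] \;=\; [Z, f_i(\bigvee_j X_j)],
$$
where the outer equalities use compactness of $Z$ in $\SH(S)$ and the middle two use the adjunction defining $f_i$. Since such $Z$ generate $\Sigma_T^i \SH(S)^\eff$, the comparison map is an isomorphism.

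The main obstacle is essentially set-up: one must verify that Pelaez's colocalized motivic model category is stable in the sense required by Lemma \ref{stab-pres}, and that the derived identity adjunction there genuinely models the inclusion/colocalization adjunction on homotopy categories. Once this bookkeeping is in place, the remainder is a standard compact-generator argument and there is no further homotopical subtlety, because everything is already contained in the sum-preservation statement handled by the previous lemma.
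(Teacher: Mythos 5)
Your proof is correct and takes essentially the same route as the paper: both invoke Lemma (\ref{stab-pres}) to reduce the statement to the fact that the $f_i$ preserve sums. The only difference is that the paper simply cites \cite{voe-slice} for sum-preservation, whereas you reprove it by the standard compact-generation argument; that argument is sound (note only the trivial mislabeling of which of the four equalities use compactness of $Z$ and which use the adjunction -- they in fact alternate).
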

\begin{proof}
 This follows from lemma (\ref{stab-pres}) and the fact that the $f_i$ preserve sums \cite{voe-slice}.
\end{proof}

We now can state the main observation of this text.

\begin{theorem}\label{rel-voe-conj}
 Suppose the natural map $\MGL \to \Q$ is the map from $\MGL$ to
 its zero-slice. Then \cite[Conjecture (5)]{voe-slice} holds with $H_{\MU_{2q}}$ replaced by
$s_0 \MGL \otimes \MU_{2q}$.
\end{theorem}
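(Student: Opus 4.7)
The plan is to build an explicit filtration $\cdots \to \caF_{q+1}(\MGL) \to \caF_q(\MGL) \to \cdots \to \caF_0(\MGL) = \MGL$ of $\MGL$-modules, assembled from the diagram $\D$, whose associated graded has the form predicted by the relative Voevodsky conjecture, and then under the hypothesis $\Q = s_0(\MGL)$ to identify this tower with the slice tower of $\MGL$. For $q \geq 1$, $\caF_q(\MGL)$ is built from the subdiagram $\D|_{I_{\deg \geq q}}$, so that effectivity $\caF_q(\MGL) \in \Sigma_T^q\SH(S)^{\eff}$ is immediate: each $\D(M) = \Sigma^{2\deg M, \deg M}\MGL$ with $\deg M \geq q$ lies in $\Sigma_T^q\SH(S)^{\eff}$, which is closed under homotopy colimits.

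The crucial computation is the layer identification: I would show
$$\caF_q(\MGL)/\caF_{q+1}(\MGL) \;\simeq\; \bigvee_{\deg M = q} \Sigma^{2q,q}\Q.$$
For $q=0$ this is the cofiber of $\caF_1(\MGL) \to \MGL$, identified with $\Q$ by Lemma \ref{cats}(1)--(2) and then with $s_0(\MGL)$ by hypothesis. For $q \geq 1$, going from $I_{\deg \geq q+1}$ to $I_{\deg \geq q}$ adds the (finitely many) degree-$q$ monomials, and the contribution of each $M$ of degree $q$ is localised on the subcategory $I_{\geq M}$: via division by $M$ this category is isomorphic to $I$ and the restriction $\D|_{I_{\geq M}}$ becomes $\Sigma^{2q,q}\D$, so that Lemma \ref{cats}(2) yields an inner cofiber of $\Sigma^{2q,q}\Q$. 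Combining this with Proposition \ref{cats2} (to pass from the layer to the full hocolim over $I_{\deg \geq q}$) and the cofinality of Lemma \ref{cats}(1) produces the claimed wedge over monomials of degree $q$.

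Matching with the slice tower then uses the hypothesis $\Q = s_0(\MGL)$ as follows. Since $s_0$ is the zeroth slice, $f_r(\Q) = 0$ for $r \geq 1$. Using $f_q \circ \Sigma_T^{q'} \simeq \Sigma_T^{q'} \circ f_{q-q'}$, each layer $\Sigma^{2q',q'}\Q$ with $q' < q$ satisfies $f_q(\Sigma^{2q',q'}\Q) = 0$. The cofiber $\MGL/\caF_q(\MGL)$ is a finite iterated extension of such layers (monomials of a fixed degree being finitely many), so exactness of $f_q$ gives $f_q(\MGL/\caF_q(\MGL)) = 0$. Combined with $\caF_q(\MGL) \in \Sigma_T^q\SH(S)^{\eff}$, this is the universal property that characterises $\caF_q(\MGL) = f_q(\MGL)$. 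Consequently the cofiber computed in the previous paragraph becomes $s_q(\MGL) = \Sigma^{2q,q} s_0(\MGL) \otimes \MU_{2q}$, which is the relative version of \cite[Conjecture (5)]{voe-slice}. Corollary \ref{f-stab-pres} is essential to ensure that $f_q$ commutes with the homotopy colimits arising when passing from finite truncations (in finitely many $x_i$) to the full diagram $\D$.

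The main obstacle is the precise layer identification in the second step: extracting the wedge decomposition $\bigvee_{\deg M = q} \Sigma^{2q,q}\Q$ from the ``difference'' between $\hocolim \D|_{I_{\deg \geq q}}$ and $\hocolim \D|_{I_{\deg \geq q+1}}$ requires a careful interplay of Proposition \ref{cats2}, the cofinality arguments of Lemma \ref{cats}(1), and the shift symmetry $I_{\geq M} \cong I$. Once this layer description is in hand, the matching with the slice tower is a formal consequence of the hypothesis $\Q = s_0(\MGL)$ via the vanishing of $f_r$ on slices.
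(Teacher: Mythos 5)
Your overall architecture is the same as the paper's: filter $\MGL$ by $\caF_q(\MGL)=\hocolim \D_{\deg\ge q}$, identify the associated graded with $\bigvee_{\deg M=q}\Sigma^{2q,q}\Q$, and use the hypothesis $\Q\simeq s_0\MGL$ together with Corollary (\ref{f-stab-pres}) to match this with the slice tower. Your endgame is a genuine variant: instead of the paper's inductive identification $f_q\MGL\simeq\hocolim\D_{\deg\ge q}$, you invoke the characterization ``$A\in\Sigma_T^q\SH(S)^{\eff}$ and $f_q(E/A)=0$ imply $A\simeq f_q(E)$,'' which is correct (apply the exact functor $f_q$ to the triangle $A\to E\to E/A$) and, once the layers are known, gives a clean conclusion.

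The genuine gap is exactly the step you flag: the wedge decomposition of $\cofib(\hocolim\D_{\deg\ge q+1}\to\hocolim\D_{\deg\ge q})$ does not follow from Lemma (\ref{cats}) and Proposition (\ref{cats2}) as you combine them. The two homotopy colimits are taken over \emph{different} index categories, so there is no objectwise cofiber to compute, and Proposition (\ref{cats2}) only produces an equivalence of homotopy colimits under a hypothesis --- it does not isolate the degree-$q$ contribution as a wedge. To make your route work unconditionally you would need an extra gluing statement (that $I_{\deg\ge q}$ is obtained from $I_{\deg\ge q+1}$ by attaching the sinks $M$ of degree $q$ along $I_{\ge M}^{\circ}$, and that $\hocolim$ turns this into a homotopy pushout), which is not among the stated lemmas. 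The paper's way around this is the one move you are missing: replace the source by $\hocolim\caF_{q+1}\D_{\deg\ge q}$ --- legitimate because $f_{q+1}$ preserves homotopy colimits and because Proposition (\ref{cats2}) applies to $\caF_{q+1}\D_{\deg\ge q}$, its hypothesis being verified via the shift symmetry $\D|_{I_{\ge M}}\simeq\Sigma^{2q,q}\D$ and the assumption that the resulting error term $\Sigma^{2q,q}\Q\simeq\Sigma_T^q s_0\MGL$ is killed by $f_{q+1}$. Now both homotopy colimits live over $I_{\deg\ge q}$, the objectwise cofiber is concentrated on the finitely many degree-$q$ monomials with value $\Sigma_T^q s_0\MGL$, and its homotopy colimit is the desired coproduct. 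Note in particular that the hypothesis $\Q\simeq s_0\MGL$ already enters the layer computation itself, not only the subsequent matching with the slice tower; your plan to compute the layers first and use the hypothesis only afterwards therefore requires the additional categorical gluing argument, or else should be reorganized along the paper's induction.
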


Note the compatibility with the natural homomorphism $\MU_* \to \MGL_{**}$ still makes sense.
\begin{proof}
Lemma (\ref{cats}).(1) and (2) and \cite[Theorem 19.6.13]{hirschhorn}
imply that the map from $\MGL$ to the total cofiber
of $\D$ with respect to $\D^\circ \to \D$ is the map to $\Q$, hence by assumption this
is the map to the zero-slice.
Thus $\hocolim \D^\circ \to f_1\MGL$ is an equivalence. By corollary
(\ref{f-stab-pres}) the $f_i$ commute with homotopy colimits.
We denote by $\caF_i$ functorial versions of the $f_i$ on the level of model categories.
Then we can rewrite $f_2 \hocolim \D^\circ \simeq \hocolim \caF_2 \D^\circ$.
Observe the diagram $\caF_2 \D^\circ$ satisfies the asssumptions of proposition
(\ref{cats2}) for $n=1$, hence $f_2 \MGL \simeq \hocolim \caF_2 \D^\circ \simeq \hocolim \D_{\deg \ge 2}$.

Increasing the degree of the monomials it follows from proposition
(\ref{cats2}) by induction that
$f_n \MGL \simeq \hocolim \D_{\deg \ge n}$. In addition the maps
$$f_{n+1} \MGL \to f_n \MGL$$ are the naturally induced maps
$$\hocolim \D_{\deg \ge n+1} \to\hocolim \D_{\deg \ge n}.$$
Thus the $s_n \MGL$ are the cofibers of these maps.
We rewrite the source again as $\hocolim \caF_{n+1} \D_{\deg \ge n}$.
Cofibers commute with homotopy colimits, thus $s_n \MGL \simeq \hocolim \, \cofib(\caF_{n+1} \D_{\deg \ge n}\to
\D_{\deg \ge n})$. The value of this cofiber of diagrams at a monomial of degree $> n$ is zero,
at a monomial of degree $n$ we exactly have $\Sigma_T^n s_0 \MGL$. Now
it is easy to see that the homotopy colimit of such a diagram is the homotopy coproduct of
the entries in degree $n$, which is $\Sigma_T^n s_0 \MGL \otimes \MU_{2n}$.

The compatibility with the natural homomorphism $\MU_* \to \MGL_{**}$ follows by the choice of the $x_i$.

\end{proof}

Recall the natural orientation $\MGL \to \MZ$ of the motivic Eilenberg MacLane spectrum.
It is additive, in particular the $x_i$ all map to zero in $\MZ_{**}$. Iteratively we get
factorizations $\MGL/(x_1,\ldots,x_n) \MGL \to \MZ$
in $\SH(S)$ in a compatible way, which gives a map $\Q \to \MZ$.

\begin{corollary} \label{per-field}
 Suppose $S$ is the spectrum of a perfect field. If the natural map $\Q\to \MZ$ is an
isomorphism then \cite[Conjecture (5)]{voe-slice} holds.
\end{corollary}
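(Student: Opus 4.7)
The plan is to deduce from the hypothesis $\Q \simeq \MZ$ that the natural map $\MGL \to \Q$ is precisely the zero-slice truncation $\MGL \to s_0 \MGL$, then invoke Theorem~\ref{rel-voe-conj}, and finally match the coefficients by identifying $s_0 \MGL$ with $\MZ$.

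I would first assemble three ingredients. Corollary~\ref{s0-isom} combined with the Voevodsky--Levine theorem that $s_0 \unit \simeq \MZ$ over a perfect field yields $s_0 \MGL \simeq \MZ$. Next, the defining property $f_{i+1} s_i = 0$ of the slice filtration applied to $\MZ = s_0 \unit$ gives $f_1 \MZ = 0$. Finally, Lemma~\ref{cats} exhibits the fibre of $\MGL \to \Q$ as $\hocolim \D^\circ$; each $\D(M)$ for a non-constant monomial $M$ is $\Sigma_T^{\deg M} \MGL$ with $\deg M \ge 1$, and since $\Sigma_T \SH(S)^\eff$ is a localizing triangulated subcategory, the cofibre $C$ of $\MGL \to \Q$ lies in $\Sigma_T \SH(S)^\eff$.

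Combining these, the hypothesis $\Q \simeq \MZ$ forces $f_1 \Q = 0$. Applying $f_1$ to the cofibre sequence $\MGL \to \Q \to C$ and using $f_1 C = C$ yields an isomorphism $\alpha \colon C[-1] \simeq f_1 \MGL$ compatible with the natural maps to $\MGL$; taking cofibres then identifies $\Q$ with $s_0 \MGL$ compatibly with the canonical structural maps from $\MGL$. The hypothesis of Theorem~\ref{rel-voe-conj} is thereby satisfied, so the slices of $\MGL$ follow the pattern of \cite[Conjecture (5)]{voe-slice} with $H_{\MU_{2q}}$ replaced by $s_0 \MGL \otimes \MU_{2q}$; substituting $s_0 \MGL \simeq \MZ$ forces $s_0 \MGL \otimes \MU_{2q} \simeq H_{\MU_{2q}}$, recovering the conjecture exactly. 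The main point to verify carefully is the naturality in the identification $\Q \simeq s_0 \MGL$: one must check that the isomorphism $\alpha$ produced by applying $f_1$ to the distinguished triangle really respects the structural maps to $\MGL$, so that passing to cofibres produces a morphism under $\MGL$ --- a routine but essential triangle chase.
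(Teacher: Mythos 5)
Your proof is correct, and it reaches the same destination as the paper (reduce to Theorem~(\ref{rel-voe-conj}) by showing $\MGL \to \Q$ is the zero-slice truncation), but the key verification is carried out by a different mechanism. The paper's own proof is a two-line argument: over a perfect field $s_0\unit \cong \MZ$, so Corollary~(\ref{s0-isom}) transfers this along the unit map to conclude that the orientation $\MGL \to \MZ$, hence by hypothesis $\MGL \to \Q$, is the zero-slice map. You instead verify the universal property of the truncation directly: the fibre of $\MGL \to \Q$ is $\hocolim \D^\circ$ by Lemma~(\ref{cats}), which lies in $\Sigma_T\SH(S)^\eff$ because each $\D(M)$ is a positive Tate twist of the effective spectrum $\MGL$, while $f_1\Q = f_1\MZ = f_1 s_0\unit = 0$; applying the triangulated functor $f_1$ to the cofibre sequence then identifies $\Q$ with $s_0\MGL$ under $\MGL$ (using $f_0\MGL = \MGL$). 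Your route is more explicit and makes visible exactly which inputs are used where --- in particular it isolates the roles of the effectivity of $\MGL$ and of the vanishing $f_1 s_0 = 0$, which the paper's compressed phrasing leaves implicit --- at the cost of repeating part of the bookkeeping already done inside the proof of Theorem~(\ref{rel-voe-conj}). Both arguments need the Levine--Voevodsky computation of $s_0\unit$ and hence the perfect field hypothesis, and both conclude by substituting $s_0\MGL \cong \MZ$ into the conclusion of the theorem to recover $H_{\MU_{2q}}$.
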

\begin{proof}
 By \cite{levine-htp} and \cite{voevodsky-zero-slice} $s_0 \unit \cong \MZ$. Thus by corollary (\ref{s0-isom})
the map $\MGL \to \MZ$ is the map from $\MGL$ to its zero-slice. The statement follows from theorem (\ref{rel-voe-conj}).
\end{proof}

\section{$K$-theory}

By the Landweber exactness theorem \cite{NSO1} and \cite{NSO2} the spectrum $\KGL$
is the Landweber spectrum associated to the $\MU_*$-algebra $x_1^{-1}\MU_*/(x_2,x_3,\ldots) \MU_*
\cong \integers[u,u^{-1}]$. The latter algebra classifies the multiplicative formal group law
$x + y -uxy$.

The orientation $\MGL\to \KGL$ sends all $x_i \in \MGL_{2i,i}$, $i \ge 2$, to $0 \in \KGL_{2i,i}$.
Thus we obtain a factorization $\MGL/(x_2,x_3\ldots)\MGL \to \KGL$.
Since $x_1$ acts invertibly on $\KGL$ this map further factors as $$B \colon x_1^{-1} \MGL/(x_2,x_3\ldots)\MGL \to \KGL.$$

\begin{lemma}\label{land-seq}
Let $$0 \to M_* \overset{f}{\to} N_* \to Q_* \to 0$$ be a short
exact sequence of evenly graded Landweber exact $\MU_*$-modules.
Let $F \colon \EE_M \to \EE_N$ be a map of spectra in $\SH(\Z)_\caT$
representing the homology transformation given by $f$
via the motivic Landweber exact functor theorem. Then
the cofiber of $F$ represents the Landweber theory given by $Q_*$.
\end{lemma}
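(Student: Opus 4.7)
The plan is to complete $F$ to an exact triangle $\EE_M \xrightarrow{F} \EE_N \to C \to \Sigma \EE_M$ in $\SH(\Z)_\caT$; since $\SH(\Z)_\caT$ is a triangulated subcategory, $C$ lies there automatically. The goal is then to identify $C$ with the Landweber spectrum $\EE_Q$ associated to $Q_*$.

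First I would evaluate the homology theory represented by this triangle on an arbitrary test object $X$, producing a long exact sequence whose outer terms are, by the motivic Landweber exact functor theorem of \cite{NSO1, NSO2}, given by $\MGL_{**}(X) \otimes_{\MU_*} M_*$ and $\MGL_{**}(X) \otimes_{\MU_*} N_*$, with the connecting map identified with $1 \otimes f$. The decisive input is the Landweber exactness of $Q_*$: this is precisely the condition ensuring that $- \otimes_{\MU_*} Q_*$ is exact on $\MU_*$-modules of the form $\MGL_{**}(X)$, so that tensoring with $0 \to M_* \to N_* \to Q_* \to 0$ remains short exact. Consequently the long exact sequence collapses to
\[
0 \to \MGL_{**}(X) \otimes_{\MU_*} M_* \to \MGL_{**}(X) \otimes_{\MU_*} N_* \to C_{**}(X) \to 0,
\]
yielding a natural isomorphism $C_{**}(X) \cong \MGL_{**}(X) \otimes_{\MU_*} Q_*$, which is exactly the Landweber theory represented by $\EE_Q$.

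To upgrade this homology-theoretic identification to a spectrum-level equivalence, I would construct the canonical comparison map via the universal property of the cofiber: the composite $\EE_M \xrightarrow{F} \EE_N \to \EE_Q$ represents the natural transformation induced by $(N_* \to Q_*) \circ f = 0$ and, by the representing property furnished by the Landweber theorem, is null; this produces a map $C \to \EE_Q$ in $\SH(\Z)_\caT$. The preceding calculation shows this map is an $\MGL$-homology isomorphism, and invoking the uniqueness clause of the motivic Landweber theorem (equivalently, conservativity of the Landweber-style homology theories on the relevant subcategory of $\SH(\Z)_\caT$) then yields $C \simeq \EE_Q$. The main obstacle I expect is the Tor-vanishing step: one must appeal carefully to the precise form of motivic Landweber exactness in \cite{NSO1, NSO2}, set up so that the topological argument---tensoring $\MGL_{**}(X)$ with a Landweber exact module is exact---transfers to the motivic setting. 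The remaining identifications are then formal consequences of the triangulated structure on $\SH(\Z)_\caT$ together with the representability theorem.
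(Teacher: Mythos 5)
Your proposal is correct and follows essentially the same route as the paper: the long exact sequence of the cofiber triangle collapses into short exact sequences because Landweber exactness supplies the needed Tor-vanishing against $\MGL$-homology comodules, which the paper makes precise by passing to flat quasi-coherent sheaves on the stack $[\Spec(\MU_*)/\Spec(\MU_*\MU)]$. Your final comparison-map step is harmless but not required, since the lemma only asserts that the cofiber represents the homology theory associated to $Q_*$.
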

\begin{proof}
We let $\X$ be the stack $[\Spec (\MU_*)/\Spec (\MU_* \MU)]$
and $\widetilde{M}_*$ etc. the quasi coherent sheaves on $\X$
obtained from the $M_*$ etc. by pushforward along the
map $\Spec (\MU_*) \to \X$. Then by Landweber exactness
the sequence
$$0 \to \widetilde{M}_* \to \widetilde{N}_* \to \widetilde{Q}_* \to 0$$
is a short exact sequence of flat $\caO_\X$-modules.
In particular tensoring this sequence with a quasi coherent
$\caO_\X$-module yields again a short exact sequence.
However, the Landweber theorem (\cite{NSO1}) is proved by considering
the $\MGL$-homology of a motivic spectrum as a
$(\MGL_*,\MGL_*\MGL)$-comodule and via restriction
along a map of Hopf algebroids as $(\MU_*,\MU_*\MU)$-comodule, then tensoring this
over $\caO_\X$ with $\widetilde{M}_*$ and finally
pulling back to $\Spec (\MU_*)$. This shows
that the sequence of motivic homology theories
obtained from the original sequence is short exact.
In particular if the first map is represented
by any map of motivic spectra then the cofiber will
represent the homology theory associated with $Q_*$.
\end{proof}

\begin{theorem}\label{quot-KGL}
The map of spectra $B \colon x_1^{-1} \MGL/(x_2,x_3,\ldots)\MGL \to \KGL$
is an isomorphism.
\end{theorem}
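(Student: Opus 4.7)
\medskip

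\noindent\textbf{Proof plan for Theorem \ref{quot-KGL}.} The plan is to apply Lemma~\ref{land-seq} inductively, realizing the target $\MU_*$-algebra $x_1^{-1}\MU_*/(x_2,x_3,\ldots)\MU_* \cong \integers[u,u^{-1}]$ as a sequential combination of quotients and a localization starting from $\MU_*$ itself, and matching this step by step with the construction of $x_1^{-1}\MGL/(x_2,x_3,\ldots)\MGL$.

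First I would observe that $\MGL$ itself represents the Landweber theory associated to $\MU_*$ as an $\MU_*$-module (tautologically, via the canonical orientation). Next, because $x_1^{-1}\MGL$ is constructed as the homotopy colimit of the telescope $\MGL \xrightarrow{x_1} \Sigma^{-2,-1}\MGL \xrightarrow{x_1} \cdots$, and because the Landweber construction $M_* \mapsto \EE_M$ commutes with filtered colimits in the module variable (both on the algebraic and on the spectrum side, since the construction is given by a relative tensor product and homotopy colimits distribute over smash products), $x_1^{-1}\MGL$ represents the Landweber theory for $x_1^{-1}\MU_*$. Now I would iterate: for each $n \ge 2$, the short exact sequence
\[
 0 \to x_1^{-1}\MU_*/(x_2,\ldots,x_{n-1})[-|x_n|] \xrightarrow{x_n} x_1^{-1}\MU_*/(x_2,\ldots,x_{n-1}) \to x_1^{-1}\MU_*/(x_2,\ldots,x_n) \to 0
\]
is exact (since $x_n$ is a non-zero-divisor on a polynomial ring in the remaining generators over $\integers[x_1^{\pm 1}]$), and by Lemma~\ref{land-seq} applied inductively, its realization by the multiplication-by-$x_n$ map on the previous stage identifies $x_1^{-1}\MGL/(x_2,\ldots,x_n)\MGL$ as the Landweber spectrum for $x_1^{-1}\MU_*/(x_2,\ldots,x_n)\MU_*$. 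Taking the homotopy colimit over $n$ (and using once more that the Landweber construction commutes with filtered colimits) gives that $x_1^{-1}\MGL/(x_2,x_3,\ldots)\MGL$ represents the Landweber theory for $\integers[u,u^{-1}]$, which is $\KGL$. A comparison of classifying maps of formal group laws then identifies this equivalence with the canonical map $B$.

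The main technical obstacle is verifying that each intermediate $\MU_*$-module $A_n := x_1^{-1}\MU_*/(x_2,\ldots,x_n)\MU_*$ is Landweber exact, which is required both to form the homology theory $\EE_{A_n}$ and to apply Lemma~\ref{land-seq} at each step. Concretely, one must check that for every prime $p$ the $v_i$-sequence acts regularly on $A_n/p$. Because the $x_i$ can be taken to be Quillen's polynomial generators, the $v_i$ at prime $p$ agree modulo decomposables with $x_{p^i-1}$, and after inverting $x_1$ and killing the $x_j$ ($2 \le j \le n$, $j \notin \{p^i-1\}$) the remaining presentation is a polynomial ring in the indeterminates with $v_1$ already invertible; regularity of $(v_1, v_2, \ldots)$ mod $p$ then reduces to a standard regularity check on a localized polynomial ring. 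This algebraic verification is the only step where real work is required; once it is in place the homotopical part is a clean inductive application of Lemma~\ref{land-seq} together with compatibility of the construction with filtered homotopy colimits.
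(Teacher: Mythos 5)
Your proposal follows essentially the same route as the paper: it likewise establishes that the modules $x_1^{-1}\MU_*/(x_2,\ldots,x_n)\MU_*$ are Landweber exact, applies Lemma (\ref{land-seq}) inductively to the short exact sequences given by multiplication by the successive $x_i$, and passes to the (filtered) colimit to identify the quotient spectrum with the Landweber spectrum for $\integers[u,u^{-1}]$, i.e. $\KGL$. The only difference is in the Landweber-exactness verification, where the paper uses the quicker observation that these modules are torsion-free and $v_1$ acts invertibly after reduction mod $p$ (so the regularity condition terminates at once), while you sketch the fuller regularity check; both amount to the same thing.
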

\begin{proof}
We use that $x_1^{-1} \MGL/(x_2,x_3,\ldots)\MGL$ can also be constructed
by first inverting $x_1$ and then quotienting out the $x_i$, $i \ge 2$.
Now observe that all the quotients $x_1^{-1} \MU_*/(x_2,\ldots,x_n) \MU_*$
are Landweber exact: they are torsionfree, and for a prime $p$
the element $v_1$ of $(\MU_*)_{(p)}$ already acts invertibly.
Thus the
$$0 \to \Sigma^{n+1} x_1^{-1} \MU_*/(x_2,\ldots,x_n)\MU_* \to x_1^{-1} \MU_*/(x_2,\ldots,x_n) \MU_*$$
$$\to x_1^{-1} \MU_*/(x_2,\ldots,x_{n+1}) \MU_* \to 0$$
($\Sigma$ refers to a shift of evenly graded groups)
are short exact sequences of evenly graded Landweber modules,
and lemma (\ref{land-seq}) applies. Thus it follows inductively that the
quotients $_1^{-1} \MGL/(x_2,\ldots,x_n)\MGL$
are the Landweber spectra for the modules $x_1^{-1} \MU_*/(x_2,\ldots,x_n)
\MU_*$. Passing to the colimit shows the claim.
\end{proof}

\begin{remark}
Fix a prime $p$ and let $\BP^\Top$ be the topological Brown-Peterson spectrum.
Let $\BP$ be the motivic spectrum on the Landweber coefficients $\BP^\Top_*$.
It can be seen that this coincides with the definition given in \cite{vezzosi}
since both definitions give rise to the universal oriented ring cohomology theory
on compact objects with $p$-typical formal group law.

Let $\EE(n)^\Top$ be the topological spectrum for the coefficients
$$v_n^{-1} \BP^\Top_*/(v_{n+1},v_{n+2},\ldots).$$
The corresponding motivic Landweber spectrum si denoted $\EE(n)$.
Now the quotients $v_n^{-1} \BP^\Top_*/(v_{n+1},\ldots,v_{n+k}) \BP^\Top_*$
are also Landweber exact. Hence by the same method as for theorem (\ref{quot-KGL})
we see that there is an isomorphsim
$$\EE(n) \cong v_n^{-1} \BP/(v_{n+1},v_{n+2},\ldots)\BP,$$
compare with \cite{hornbostel}.

The map from \cite[par. 6]{spitzweck-slice}
is ill-defined since $\BP$ is not a direct summand of $\MGL_{(p)}$
as $\MGL$-module.
\end{remark}

Next we analize the relationship to connective or effective $K$-theory.
A version of that has been introduced in \cite{levine-htp}.

Since $\MGL/(x_2,\ldots)\MGL$ is effective, the canonical map
$\MGL/(x_2,\ldots)\MGL \to x_1^{-1} \MGL/(x_2,\ldots)\MGL \cong \KGL$
factors as $$B^\eff \colon \MGL/(x_2,\ldots)\MGL \to f_0 \KGL.$$

\begin{proposition}
Suppose the map
$\MGL \to \Q$ is the map from $\MGL$ to its zero-slice.
 Then $B^\eff$ induces isomorphisms on slices.
If $S$ is the spectrum of a perfect field and $B^\eff$ is an isomorphism,
then the map $\MGL \to \Q$ is the map from $\MGL$ to its zero-slice.
\end{proposition}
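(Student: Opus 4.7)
The plan is to prove each of the two implications by a slice computation, organized around the $x_1$-filtration on $\MGL/(x_2,x_3,\ldots)\MGL$ and the identification $\KGL \cong x_1^{-1}\MGL/(x_2,x_3,\ldots)\MGL$.

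For the forward implication, Theorem \ref{rel-voe-conj} applies under the hypothesis and gives $s_n\MGL \simeq \Sigma_T^n s_0\MGL \otimes \MU_{2n}$, compatibly with the $\MU_*$-action. I would iterate the cofibers of multiplication by $x_i$ for $i \geq 2$, each of which acts injectively on the coefficient factor, to obtain $s_n(\MGL/(x_2,\ldots)\MGL) \simeq \Sigma_T^n s_0\MGL \otimes (\MU_*/(x_2,\ldots))_{2n} \simeq \Sigma_T^n s_0\MGL$ for $n \geq 0$, using $\MU_*/(x_2,\ldots) \cong \Z[x_1]$; in particular $x_1$ acts as an isomorphism on $s_n$ for $n \geq 1$. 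Since $s_n$ commutes with homotopy colimits by Corollary \ref{f-stab-pres}, this yields $s_n\KGL \simeq \Sigma_T^n s_0\MGL$ for all $n$ and hence $s_n f_0\KGL \simeq \Sigma_T^n s_0\MGL$ for $n \geq 0$. The factorization $\MGL/(x_2,\ldots)\MGL \to f_0\KGL \to \KGL$ is the canonical inclusion of the $k=0$ term into the $x_1$-colimit; on $s_n$ ($n \geq 0$) it is an isomorphism, and so is $f_0\KGL \to \KGL$ by construction, whence $B^\eff$ induces isos on all slices.

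For the backward implication, with $k$ perfect and $B^\eff$ iso, $\Q \cong f_0\KGL/x_1 f_0\KGL$. Effectivity of $\MGL$ unconditionally gives $s_0(\MGL/(x_2,\ldots)\MGL) \cong s_0\MGL$ (the relevant fibers are positively $T$-suspended effective spectra), which over a perfect field equals $\MZ$ by Corollary \ref{s0-isom} and \cite{levine-htp,voevodsky-zero-slice}; under $B^\eff$ iso this forces $s_0 f_0\KGL = s_0\KGL \cong \MZ$. Bott periodicity (the image of $x_1$ acts invertibly on $\KGL$) then yields $s_n\KGL \cong \Sigma_T^n\MZ$ for all $n$, so $s_n f_0\KGL \cong \Sigma_T^n\MZ$ for $n \geq 0$. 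Applying $s_n$ to the cofiber $\Sigma^{2,1}f_0\KGL \xrightarrow{x_1} f_0\KGL \to \Q$: for $n \geq 1$ the Bott iso gives $s_n\Q = 0$; for $n = 0$, $s_0\Sigma^{2,1}f_0\KGL = \Sigma_T s_{-1}f_0\KGL = 0$ gives $s_0\Q \cong \MZ$. So $\Q$ is effective with all slices zero outside degree $0$, where they equal $\MZ$.

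To close, I would invoke convergence of the slice tower for effective spectra over a perfect field (a theorem of Levine), which forces $f_1\Q = 0$ and hence $\Q \simeq s_0\Q \simeq \MZ$. The orientation $\MGL \to \MZ$ is the zero-slice map of $\MGL$ (it factors uniquely through the slice-concentrated $\MZ$ and is the identity on $\pi_0$), and it factors through $\MGL \to \Q \xrightarrow{\cong} \MZ$; thus $\MGL \to \Q$ is the zero-slice map. I expect the main obstacle to be the slice-tower convergence input, which I would cite rather than reprove; the rest is slice bookkeeping given Theorem \ref{rel-voe-conj} and Bott periodicity.
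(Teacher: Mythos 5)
Your forward implication is essentially correct and is a more hands-on version of what the paper does: where the paper quotes \cite[Corollary 4.2]{spitzweck-slice} for $s_0\KGL\cong s_0\MGL$ and then iterates using periodicity, you compute $s_n(\MGL/(x_2,\ldots)\MGL)$ directly from Theorem (\ref{rel-voe-conj}) together with the $\MU_*$-compatibility and pass to the $x_1$-colimit via Corollary (\ref{f-stab-pres}). That part stands.

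The backward implication has a genuine gap at the closing step. Having computed $s_n\Q=0$ for $n\ge 1$ and $s_0\Q\cong\MZ$, you invoke ``convergence of the slice tower for effective spectra over a perfect field'' to force $f_1\Q=0$. No such theorem is available in this generality: from $s_nX=0$ for $n\ge 1$ one only concludes that the maps $f_{n+1}X\to f_nX$ are isomorphisms for $n\ge 1$, hence that $f_1X$ lies in $\bigcap_n\Sigma_T^n\SH(S)^\eff$; the vanishing of this intersection (slice-completeness) is exactly the delicate point, and Levine's convergence results are proved for specific spectra under connectivity hypotheses, not for arbitrary effective spectra. The detour is also unnecessary, and avoiding it is the actual content of the paper's argument: by periodicity of $\KGL$ one has $f_1\KGL\cong f_1(\Sigma_T\KGL)\cong\Sigma_T f_0\KGL$ via the Bott element, so the multiplication-by-$u$ map $\Sigma_T f_0\KGL\to f_0\KGL$ \emph{is} the map $f_1\KGL\to f_0\KGL$, and its cofiber is $s_0\KGL$ on the nose. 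Thus if $B^\eff$ is an isomorphism, $\Q\cong\cofib(f_1\KGL\to f_0\KGL)=s_0\KGL$, and $f_1\Q=f_1s_0\KGL=0$ with no convergence input. To finish, the fiber of $\MGL\to\Q$ lies in $\Sigma_T\SH(S)^\eff$ (by Lemma (\ref{cats}) it is $\hocolim\D^\circ$, every entry of which is at least a simple $T$-suspension of an effective spectrum); applying the triangulated functor $f_1$ to the triangle $\hocolim\D^\circ\to\MGL\to\Q$ and using $f_1\Q=0$ identifies that fiber with $f_1\MGL$, so $\MGL\to\Q$ is the zero-slice map. Your final appeal to the orientation $\MGL\to\MZ$ being ``the'' zero-slice map is then not needed (and as justified --- ``factors uniquely through the slice-concentrated $\MZ$'' --- it is too vague to carry the argument on its own).
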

\begin{proof}
Let us assume $\MGL \to \Q$ is the map to the zero-slice.
Then by theorem (\ref{rel-voe-conj}) the assumption $\SlMGL$
of \cite{spitzweck-slice} is fulfilled,
hence by \cite[corollary 4.2]{spitzweck-slice} we have
$s_0 \KGL \cong s_0 \MGL$, and the isomorphism
is realized by the map $s_0 \MGL \to s_0 f_0 \KGL$.

By the periodicity of $\KGL$ the map $f_1 \KGL \to f_0 \KGL$
is isomorphic to the map $\Sigma_T f_0 \KGL \to f_0 \KGL$ given by
multiplication by the Bott element $u \in (f_0 \KGL)_{2,1} = \KGL_{2,1}$.
Thus the cofiber of this map is isomorphic to $s_0 \MGL$,
and the same holds true for the cofiber of the map
$\Sigma_T \MGL/(x_2,\ldots)\MGL \to \MGL/(x_2,\ldots)\MGL$
given by multiplication by $x_1$  by the assumption.
Interating this process shows the claim.

Suppose now that $S$ is the spectrum of a perfect field.
Then by \cite{levine-htp} there are isomorphisms $s_0 \unit \cong s_0 \KGL \cong \MZ$,
and by corollary (\ref{s0-isom}) we also have $s_0 \MGL \cong s_0 \KGL$.
Now if $B^\eff$ is an isomorphism, then the map $\Sigma_T \MGL/(x_2,\ldots)\MGL \to \MGL/(x_2,\ldots)\MGL$
given by multiplication by $x_1$ is the map $f_1 \MGL/(x_2,\ldots)\MGL \to \MGL/(x_2,\ldots)\MGL$
showing the claim.
\end{proof}

\section{Rational slices}

We show in this paragraph that the asumptions from the
last sections hold true after rationalization, at least over
regular base schemes.

We denote by $\LQ$ the Landweber spectrum associated to the $\MU_*$-module $\bQ$
carrying the additive formal group law.

We note that any rational motivic Landweber spectrum has a decomposition
into a sum of $\Sigma^{2i,i} \LQ$ for various $i$. This follows directly
from the corresponding decomposition of the topological coefficients.

Since the Landweber coefficients for the rational $K$-theory spectrum
are $\bQ[v,v^{-1}]$ ($v$ the Bott element in homological degree $2$) we obtain
\begin{equation}\label{kgl-decomp1}
\KGL_\bQ \cong \PLQ = \bigoplus_{i \in \integers} \Sigma^{2i,i} \LQ,
\end{equation}
compare also with \cite[Theorem 10.1]{NSO1}.

Using projectors \cite[Theoreme IV.72]{riou-these} gives a decomposition
\begin{equation}\label{kgl-decomp2}
 \KGL_\bQ \cong \bigoplus_{i \in \integers} \bH_B^{(i)}.
\end{equation}
This decomposition is first defined for regular base schemes $S$. For morphisms
between regular base schemes it pulls back. It thus makes sense to
pull back the decomposition given over $\Spec(\integers)$ to any base scheme,
which we shall assume.

The next statement follows from the structure of a rational Snaith map
$$\colim_n (\LQ \wedge \Sigma^{-2n,-n} \Sigma^\infty \PP^\infty_+)
\to \KGL_\bQ$$ (see \cite{spitzweck-oestvaer} for the Snaith map) and the way the projectors
for (\ref{kgl-decomp2}) are defined.

We call the map $$\Sigma^{-2i,-i} \Sigma^\infty \PP^\infty_+
\to \KGL_\bQ$$ the $i$'th Snaith map.
\begin{proposition}\label{decomp}
 The decompositions (\ref{kgl-decomp1}) and (\ref{kgl-decomp2}) coincide.
\end{proposition}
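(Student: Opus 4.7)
The plan is to use the rational Snaith equivalence as a common bridge and to show that both decompositions of $\KGL_\bQ$ come from the very same family of orthogonal idempotents. Rationally the cellular structure on $\PP^\infty$ splits, so
\[
\LQ \wedge \Sigma^\infty\PP^\infty_+ \simeq \bigoplus_{i \ge 0} \Sigma^{2i,i}\LQ,
\]
and assembling this over $n$ via the shifted Snaith maps $\Sigma^{-2i,-i}\Sigma^\infty\PP^\infty_+ \to \KGL_\bQ$ yields an identification of $\KGL_\bQ$ with $\bigoplus_{i \in \integers} \Sigma^{2i,i}\LQ$. I would first check that this identification is precisely the Landweber decomposition (\ref{kgl-decomp1}): the Bott element $v \in \KGL_{2,1}$ corresponds under the Snaith identification to the tautological class on $\PP^\infty$, so the grading by powers of $v$ coming from the coefficient splitting $\bQ[v,v^{-1}] = \bigoplus_i \bQ v^i$ matches the cellular grading.

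For (\ref{kgl-decomp2}) I would unpack Riou's construction in \cite[Th\'eor\`eme IV.72]{riou-these}. Riou defines the projectors onto $\bH_B^{(i)}$ using a compatible family of rational idempotents on the finite approximations $\PP^n$, produced from Adams operations, which assemble to idempotents on $\Sigma^\infty\PP^\infty_+$; by construction the image of the $i$'th projector composed with the $i$'th Snaith map is $\bH_B^{(i)}$. The crux is then to identify Riou's $i$'th idempotent on the cellular splitting $\bigoplus_{j \ge 0} \Sigma^{2j,j}\LQ$ with the projection onto the $j=i$ summand. This reduces to a calculation of the rational Adams operations $\psi^k$ on the cellular generators of the Beilinson cohomology of $\PP^\infty$, where each $\psi^k$ acts diagonally with eigenvalue $k^j$ on the weight-$j$ piece; this pins down Riou's projectors uniquely and matches $\bH_B^{(i)}$ with $\Sigma^{2i,i}\LQ$ as summands of $\KGL_\bQ$.

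The main obstacle is making the comparison base-change invariant. I would first carry it out over $\Spec(\integers)$, where the cellular splitting of $\PP^\infty_+$ and the Adams-eigenvalue calculation reduce to standard facts about rational $K$-theory lifted to the motivic world, and then transport the conclusion to a general regular base via the compatibility of both constructions with pullback. The Landweber side pulls back by the functoriality of the motivic Landweber exact functor theorem \cite{NSO1}, and the Riou side pulls back by construction: the idempotents on each $\PP^n$ are defined once and for all over $\Spec(\integers)$, as noted in the paragraph preceding the statement.
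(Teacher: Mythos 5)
Your overall strategy (reduce to $\Spec(\integers)$, use the Snaith map as the bridge, and match projectors) is the right one, but the step you dispose of in one sentence --- ``the grading by powers of $v$ coming from the coefficient splitting matches the cellular grading'' --- is precisely the non-trivial content of the proposition, and as stated it is not true on the nose. The cellular splitting of $\LQ \wedge \Sigma^\infty\PP^\infty_+$ depends on a choice of basis (dual to powers of some orientation class), and in the basis dual to powers of the tautological/multiplicative class the Bott transition maps of the Snaith colimit are \emph{not} diagonal: on $\bQ\poauf u \pozu$ they act by $u^j \mapsto -ju^{j-1}+ju^j$, so the cellular decomposition does not simply ``assemble over $n$'' to a decomposition of $\KGL_\bQ$, let alone to the Landweber one. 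One must pass to the basis dual to powers of the additive orientation $u'$, where the Snaith map becomes $e^{u'}$, i.e.\ acts diagonally by $\LQ b_j \to \Sigma^{2j,j}\LQ$ with a factor $1/j!$, and the entire comparison is then governed by the change of formal parameter $u = 1-e^{u'}$, $u' = \ln(1-u)$ between the multiplicative and additive formal group laws. Tracking this is the actual computation in the proof, and your argument currently asserts its conclusion rather than performing it.

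The Adams-operation route you propose for the Riou side does not circumvent this. Riou's projectors (Def.\ IV.62 and IV.71 of his thesis) are \emph{defined} by explicit power series of the form $\frac{1}{m!}\ln(1+U)^m$ in the pro-description of $\Hom(\KGL_\bQ,\KGL_\bQ)$, not as eigenprojections of Adams operations on finite $\PP^n$'s; if you instead invoke the eigenvalue characterization you must both import it from Riou and then compute how $\psi^k$ acts on the Landweber summands $\Sigma^{2i,i}\LQ$, which is again the same logarithm/exponential calculation relocated. The clean way to close the gap is to compute the image of the $n$'th Landweber projector under precomposition with the $i$'th Snaith map and check that one obtains exactly $\frac{1}{(2n+2i)!}\ln(1-u)^{2n+2i} \in \bQ\poauf u \pozu$, which is Riou's defining formula, and then observe that the Snaith system is the system Riou uses to describe $\Hom(\KGL_\bQ,\KGL_\bQ)$.
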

\begin{proof}
We can assume the base is $\Spec(\integers)$.
We first compute the map
\begin{equation} \label{pr-map}
\bQ^\integers \to \prod_{j \in \integers} \Hom(\Sigma^{2j,j} \LQ,
\Sigma^{2j,j} \LQ) \to \Hom(\PLQ,\PLQ) \cong
\end{equation}
$$\Hom(\KGL_\bQ,\KGL_\bQ) \to \Hom(\Sigma^{-2i,-i} \Sigma^\infty \PP^\infty_+,
\KGL_\bQ)=(\KGL_\bQ^{**} \poauf x \pozu)^{2i,i} = \bQ \poauf u \pozu.$$
The first map in the second line is precomposition with the $i$'th Snaith map.
On the right hand side an appropriate Bott shift is applied to powers of $x$ to obtain
the powers of $u$.
Note that for the system in $i$ on the right hand side
all transition maps are the same and given by $u^j \mapsto -ju^{j-1} + j u^j$ (this follows
from the multiplicative formal group law for $\KGL$).
Note also that we define the Bott element to be the negative reduced class of
$\PP^1$ in $\PP^\infty$ and $\BGL$.

We let $\Hom(\Sigma^\infty \PP^\infty_+,\PLQ)=\bQ \poauf u' \pozu$. Here $u'$
is the generator for the additive orientation on $\PLQ$ times the Bott element.

The multiplicative isomorphism $$\bQ \poauf u \pozu \cong \Hom(\Sigma^\infty \PP^\infty_+,\KGL_\bQ)
\cong \Hom(\Sigma^\infty \PP^\infty_+,\PLQ) \cong \bQ \poauf u \pozu$$ is given
by $u = 1 - e^{u'}$, which is the change of formal parameters between the multiplicative
and additive formal group law.

Moreover the zero'th Snaith map to $\KGL_\bQ$ is the element $1-u \in \bQ \poauf u \pozu
\cong \Hom(\Sigma^\infty \PP^\infty_+,\KGL_\bQ)$. Thus the Snaith map translates to
$$e^{u'} \in \bQ \poauf u' \pozu \cong \Hom(\Sigma^\infty \PP^\infty_+,\PLQ) \cong
\Hom_{\Mod_{\SH(\integers)}(\LQ)}(\LQ \wedge \Sigma^\infty \PP^\infty_+,\PLQ).$$

We let $\LQ \wedge \Sigma^\infty \PP^\infty_+ = \LQ<b_0,b_1,b_2,\ldots >$, where the $b_i$
are dual generators to the powers of the orientation generator.
It follows that after these identifications the Snaith map $$\LQ <b_0,b_1,b_2,\ldots > \to
\PLQ$$ has the effect $$\LQ b_j \overset{\frac{1}{j!}}{\to} \Sigma^{2j,j} \LQ$$
on summands.

The Snaith map $\Sigma^{-2i,i} \Sigma^\infty \PP^\infty_+ \to \PLQ$ is just
the same map with the appropriate Bott shift applied and is thus given on summands by
$$\Sigma^{-2i,-i} \LQ b_j \overset{\frac{1}{j!}}{\to} \Sigma^{-2i+2j,-i+j} \LQ.$$

Now let us start with the $n$'th projector $p \in \Hom(\PLQ,\PLQ)$, $n \in \integers$.
Composition with the $i$'th Snaith map gives us a map
$$\Sigma^{-2i,-i} \LQ<b_0,b_1,b_2,\ldots > \to \PLQ$$
sending, if $n \ge -i$, $\Sigma^{-2i,-i}b_{n+i}$ to $\frac{1}{(2n+2i)!} \Sigma^{2n,n} \LQ$
(abusing notation) and the other generators to $0$. Thus this map
is the element $$\frac{1}{(2n+2i)!} {u'}^{2n+2i} \in \bQ \poauf u' \pozu =
\Hom(\Sigma^{-2i,-i}\Sigma^\infty \PP^\infty_+,\PLQ).$$

Applying the equation $u'=\ln(1-u)$ we get the result that the map
(\ref{pr-map}) sends the projector $p$, if $n \ge -i$, to
$$\frac{1}{(2n+2i)!} {\ln(1-u)}^{2n+2i} \in \bQ \poauf u \pozu =
\Hom(\Sigma^{-2i,-i}\Sigma^\infty \PP^\infty_+,\KGL_\bQ),$$
otherwise to $0$.

This is the same formula as used in \cite{riou-these} to define the projectors,
see \cite[Def. IV.62, after Cor. IV.67, Def. IV.71]{riou-these}.

Now it suffices to observe that the Snaith system is indeed the system appearing
in loc. cit. to describe endomorphisms of $\KGL$ and $\KGL_\bQ$, see
\cite[Th. IV.13, before Prop. IV.33]{riou-these}.
\end{proof}

\begin{lemma} \label{LQQ}
 The natural map $\MGL_\bQ \to \LQ$ factors as an isomorphism $\Q_\bQ \to \LQ$.
\end{lemma}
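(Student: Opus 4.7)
The plan is to adapt the inductive argument used for Theorem \ref{quot-KGL}, this time quotienting by \emph{every} $x_i$ rather than inverting $x_1$ and then quotienting by $x_2, x_3, \ldots$.

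First I would verify that the rational orientation $\MGL_\bQ \to \LQ$ indeed factors through $\Q_\bQ$. Since $\LQ$ carries the additive formal group law, every $x_i \in \MGL_{**}$ acts as zero on $\LQ$ via the orientation, so the map kills each $x_i$-multiplication and hence extends uniquely through the iterated cofiber $\Q_\bQ = \MGL_\bQ/(x_1, x_2, \ldots)\MGL_\bQ$.

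For the isomorphism, I would apply Lemma \ref{land-seq} inductively. The key observation is that every quotient $(\MU_* \otimes \bQ)/(x_1,\ldots,x_n)$ is Landweber exact for free: it is a $\bQ$-module, so each prime $p$ acts invertibly and the Landweber regularity condition is vacuous. Consequently each multiplication-by-$x_{n+1}$ sequence
\begin{equation*}
0 \to \Sigma^{n+1} (\MU_* \otimes \bQ)/(x_1,\ldots,x_n) \xrightarrow{x_{n+1}} (\MU_* \otimes \bQ)/(x_1,\ldots,x_n) \to (\MU_* \otimes \bQ)/(x_1,\ldots,x_{n+1}) \to 0
\end{equation*}
is short exact between Landweber-exact modules, and Lemma \ref{land-seq} identifies $\MGL_\bQ/(x_1,\ldots,x_{n+1})\MGL_\bQ$ with the Landweber spectrum on the right-hand quotient, provided $\MGL_\bQ/(x_1,\ldots,x_n)\MGL_\bQ$ is already so identified. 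Starting from $\MGL_\bQ$ itself (the Landweber spectrum on $\MU_* \otimes \bQ$), induction on $n$ gives the claim at each finite level.

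Finally I would pass to the colimit $n \to \infty$: on the spectrum side, $\Q_\bQ \simeq \hocolim_n \MGL_\bQ/(x_1,\ldots,x_n)\MGL_\bQ$ by construction of $\Q$ as an iterated cofiber (cf. the proof of Lemma \ref{cats}); on the coefficient side, $\colim_n (\MU_* \otimes \bQ)/(x_1,\ldots,x_n) = \bQ$ with additive formal group law, whose Landweber spectrum is by definition $\LQ$. The only technical point — scarcely an obstacle, given the setup — is that the Landweber construction commutes with this filtered colimit of coefficient modules, which is immediate since both sides are assembled from the very same cofiber sequences for multiplication by $x_{n+1}$.
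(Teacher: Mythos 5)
Your argument is correct, but it is not the route the paper takes. The paper's proof is a one-liner: it invokes the rational splitting $\MGL_\bQ \cong \LQ[b_1,b_2,\ldots] \cong \LQ[x_1,x_2,\ldots]$ from \cite[Theorem 10.5]{NSO1}, under which the spectrum-level multiplications by the $x_i$ become the algebraic ones, so the iterated cofiber visibly collapses to the degree-zero summand $\LQ$ and the natural map is the projection onto that summand. You instead rerun the strategy of Theorem (\ref{quot-KGL}): observe that every rational $\MU_*$-module is automatically Landweber exact (the regularity condition for $p, v_1, v_2, \ldots$ is vacuous once $p$ is invertible), feed the multiplication-by-$x_{n+1}$ short exact sequences of coefficient modules into Lemma (\ref{land-seq}) to identify each finite quotient $\MGL_\bQ/(x_1,\ldots,x_n)\MGL_\bQ$ as a Landweber spectrum, and pass to the sequential homotopy colimit to land on $\LQ$. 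The two small points you should make explicit are (i) that the map $B$ itself is compatible with these identifications at each stage (as in the $K$-theory case, this is what makes the colimit comparison a map of Landweber theories and hence an isomorphism on Tate-cellular objects), and (ii) the factorization through $\Q_\bQ$, which you correctly reduce to the additivity of the formal group law on $\LQ$. Your approach buys independence from the explicit multiplicative splitting of $\MGL_\bQ$ and reuses machinery already set up in the paper; the paper's approach is shorter but leans on the external decomposition result of \cite{NSO1}.
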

\begin{proof}
The claim follows from the decompositions
$$\MGL_\bQ \cong \LQ[b_1,b_2,\ldots] \cong \LQ[x_1,x_2,\ldots]$$
(see \cite{NSO1} for the definition of the $b_i$, see also \cite[Theorem 10.5]{NSO1}).
\end{proof}

\begin{proposition} \label{rat-slice}
 Suppose $S$ is regular. Then the map $\MGL_\bQ \to \LQ$ is the map to the zero slice.
\end{proposition}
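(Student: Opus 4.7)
The plan is to identify $\MGL_\bQ \to \LQ$ with the canonical map to the zero slice by verifying three properties: (a) $\LQ$ is effective; (b) $f_1 \LQ = 0$, so $\LQ$ is concentrated in slice zero; and (c) the cofiber of $\MGL_\bQ \to \LQ$ lies in $\Sigma_T\SH(S)^\eff$. These three together characterize the target of the zero-slice map, by the orthogonality of $\Sigma_T \SH(S)^\eff$ and slice-zero spectra.

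For (a) and (c), I would exploit the rational polynomial decomposition $\MGL_\bQ \cong \LQ[b_1,b_2,\ldots]$ used in the proof of Lemma (\ref{LQQ}). Under it, the map $\MGL_\bQ \to \Q_\bQ \cong \LQ$ is projection onto the constant coefficient, so $\LQ$ is a retract of $\MGL_\bQ$ and the cofiber equals $\bigoplus_{(k_1,k_2,\ldots)\neq 0} \Sigma^{2d,d}\LQ$ with $d = \sum_i i k_i \ge 1$. Since $\MGL$ is effective by the corollary to Theorem (\ref{zero-slice}), its rationalization is too, and hence so is the retract $\LQ$; each summand in the cofiber then lies in $\Sigma_T^d \SH(S)^\eff \subset \Sigma_T \SH(S)^\eff$.

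The crucial point is (b). For this I would invoke Proposition (\ref{decomp}), which identifies the Landweber decomposition $\KGL_\bQ \cong \bigoplus_{i\in\integers} \Sigma^{2i,i}\LQ$ with Riou's decomposition $\KGL_\bQ \cong \bigoplus_{i\in\integers} \bH_B^{(i)}$. Regularity of $S$ is essential here: over a regular base Riou shows that the Beilinson summand $\bH_B^{(0)}$ realizes rational motivic cohomology and coincides with the zero slice $s_0(\KGL_\bQ)$. Under the identification of the two decompositions this gives $\LQ \cong \bH_B^{(0)} \cong s_0(\KGL_\bQ)$; in particular $\LQ$ is concentrated in slice zero, and so $f_1 \LQ = 0$.

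Combining (a), (b), and (c), the map $\MGL_\bQ \to \LQ$ is the map to the zero slice of $\MGL_\bQ$. I expect step (b) to be the main obstacle: it requires not just that $\bH_B^{(0)}$ is an effective summand of $\KGL_\bQ$ but that it is exactly the zero slice, which rests on Riou's finer analysis of the Beilinson projectors over regular bases.
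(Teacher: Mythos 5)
Your steps (a) and (c) --- effectivity of $\LQ$ as a retract of $\MGL_\bQ$, and the cofiber of $\MGL_\bQ\to\LQ$ landing in $\Sigma_T\SH(S)^\eff$ via the decomposition $\MGL_\bQ\cong\LQ[b_1,b_2,\ldots]$ --- agree with the paper, and your reduction of the proposition to showing $f_1\LQ=0$ is exactly the paper's reduction (the paper phrases it as $\Hom_{\SH(S)}(\Sigma^{p,q}\Sigma^\infty_+X,\LQ)=0$ for $q\ge1$, which is the same thing). The gap is in step (b). You justify $f_1\LQ=0$ by asserting that Riou identifies $\bH_B^{(0)}$ with $s_0(\KGL_\bQ)$ over regular bases. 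Riou's thesis constructs the decomposition (\ref{kgl-decomp2}) and identifies the summands $\bH_B^{(i)}$ with the Adams eigenspaces (graded pieces of the $\gamma$-filtration) of rational $K$-theory; it does not compute slices, and the identification $\bH_B^{(0)}\cong s_0(\KGL_\bQ)$ over a general regular base is essentially equivalent in strength to what is being proved here --- the paper obtains $s_0(\unit_\bQ)\cong\LQ$ and the slice description of $\KGL_\bQ$ as \emph{corollaries} of this proposition, not as inputs. As written, your step (b) is circular.

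What (b) actually requires is the vanishing $\Hom_{\SH(S)}(\Sigma^{p,q}\Sigma^\infty_+X,\LQ)=0$ for all $p\in\integers$ and $q\ge1$. Via Proposition (\ref{decomp}) this translates into the concrete statement that every map $\Sigma^{p,q}\Sigma^\infty_+X\to\KGL_\bQ$ has vanishing components in the strictly negative Bott powers, i.e.\ factors through $\LQ[u]\subset\LQ[u,u^{-1}]$; after replacing $X$ by $X\times\bG_m^q$ one may assume $q=0$. For $p\ge0$ this is the vanishing of the negative-weight Adams eigenspaces of $K_p(X)_\bQ$ for $X$ regular, which is what Riou's Corollaire VI.75 supplies. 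For $p<0$ one cannot quote that directly: the paper uses $\KGL$-periodicity to move into non-negative simplicial degree at the cost of extra $\bG_m$-suspensions, and then invokes the Bass decomposition of the $K$-theory of Laurent polynomial rings over a regular ring. This weight computation is the real content of the proposition and is precisely where regularity of $S$ enters; it cannot be replaced by an appeal to an identification of $\bH_B^{(0)}$ with the zero slice.
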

\begin{proof}
 First note that by the decomposition $\MGL_\bQ \cong \LQ[b_1,b_2,\ldots]$
and the effectivity of $\MGL$ it follows that $\LQ$ is effective.
By lemma (\ref{LQQ}) we have to show that $\MGL \to \LQ$ is the map to the zero slice.
The decomposition of $\MGL_\bQ$ shows that for this it suffices to show that
$\Hom_{\SH(S)}(\Sigma^{p,q} \Sigma^\infty_+ X,\LQ)=0$ for $X \in \Sm/S$, $p \in \integers$, $q \ge 1$.
Equivalently we have to show that every map $\Sigma^{p,q} \Sigma^\infty_+ X \to \KGL_\bQ$,
$X \in \Sm/S$, $p \in \integers$ ,$q \ge 0$,
factors through $\LQ[u]$ with respect to the decomposition
$\KGL_\bQ \cong \LQ[u,u^{-1}]$. We can assume $q=0$ by replacing $X$ by $X \times \bG_m^q$.
Proposition (\ref{decomp}) and \cite[Corollaire VI.75]{riou-these} imply this for non-negative $p$. Suppose $p < 0$. Then by
the periodicity of $\PLQ$ the claim we want to show is equivalent to the statement
that for $p+2q \ge 0$ every map $\Sigma^{p+2q,q} \Sigma^\infty_+ X \to \LQ[u,u^{-1}]$ factors through
$u^q\LQ[u]$. This follows from the decomposition \cite[Corollary (5.5)(ii)]{srinivas} of
the algebraic $K$-theory of the Laurent polynomials over a regular ring.
\end{proof}

\begin{corollary} Suppose $S$ is regular.
Then $s_i(\MGL_\bQ) \cong \Sigma_T^i \LQ \otimes \MU_{2i}$ compatible
with the homomorphism $\MU_* \to \MGL_{\bQ,**}$.
\end{corollary}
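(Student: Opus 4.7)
The plan is to apply Theorem \ref{rel-voe-conj} after rationalization. First I would observe that the entire argument of that theorem carries over with $\MGL$ replaced by $\MGL_\bQ$: the construction of the diagram $\D$, the combinatorial identifications in Lemma \ref{cats} and Proposition \ref{cats2}, and the crucial fact from Corollary \ref{f-stab-pres} that the slice functors $f_i$ commute with homotopy colimits all transfer verbatim to the rational setting (since slices are defined via colocalization, which is compatible with rationalization, and since $\Q$ is constructed from finitely many cofibers of multiplications by the $x_i$). This yields the rational analogue of Theorem \ref{rel-voe-conj}: if the natural map $\MGL_\bQ \to \Q_\bQ$ is the projection to the zero slice, then
\[ s_i(\MGL_\bQ) \cong \Sigma_T^i\, s_0(\MGL_\bQ) \otimes \MU_{2i}, \]
compatibly with $\MU_* \to \MGL_{\bQ,**}$.

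Next I would verify the hypothesis of this rational version. Lemma \ref{LQQ} identifies $\Q_\bQ$ with $\LQ$ via the natural map, and Proposition \ref{rat-slice} states precisely that the induced map $\MGL_\bQ \to \LQ$ is the projection to the zero slice of $\MGL_\bQ$. In particular $s_0(\MGL_\bQ) \cong \LQ$. Substituting into the displayed isomorphism gives
\[ s_i(\MGL_\bQ) \cong \Sigma_T^i\, \LQ \otimes \MU_{2i}, \]
and the compatibility with $\MU_* \to \MGL_{\bQ,**}$ is inherited from the rational version of Theorem \ref{rel-voe-conj}.

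The only step that might require care is verifying that Theorem \ref{rel-voe-conj} goes through rationally. Since its proof is purely homotopy-theoretic — using only the stable model category of $\MGL$-modules, the existence of functorial models $\caF_i$ on the model-categorical level, and preservation of homotopy colimits by the $f_i$ — and all of these properties are preserved under $\bQ$-localization, this is essentially bookkeeping rather than a genuine obstacle. The substantive input specific to the rational case has already been isolated in Proposition \ref{rat-slice} and Lemma \ref{LQQ}.
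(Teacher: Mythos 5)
Your proposal is correct, but it takes a genuinely different route from the paper. The paper's own proof is essentially two lines: since Proposition (\ref{rat-slice}) shows $\MGL_\bQ \to \LQ$ is the map to the zero slice, $\LQ$ is concentrated in slice degree zero ($s_0(\LQ)=\LQ$, $s_j(\LQ)=0$ for $j\neq 0$), and then the direct sum splitting $\MGL_\bQ \cong \LQ[x_1,x_2,\ldots] \cong \bigoplus_M \Sigma_T^{\deg M}\LQ$ lets one read off $s_i(\MGL_\bQ)$ summand by summand, the monomials of degree $i$ contributing exactly $\Sigma_T^i \LQ \otimes \MU_{2i}$. You instead rerun the whole quotient-tower machinery of Theorem (\ref{rel-voe-conj}) for $\MGL_\bQ$ and verify its hypothesis via Lemma (\ref{LQQ}) and Proposition (\ref{rat-slice}). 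This is valid: the proof of Theorem (\ref{rel-voe-conj}) is purely formal (as the introduction notes, it would work for any highly structured ring spectrum with a chosen regular sequence of elements), and rationalization commutes with the homotopy colimits and cofibers involved, so the rational analogue does hold. Two small caveats: your parenthetical that $\Q$ is built from ``finitely many'' cofibers is inaccurate --- it is an infinite homotopy colimit over the monomial category --- though this is harmless since $\bQ$-localization commutes with all homotopy colimits; and one should say a word about $\MGL_\bQ$ admitting a suitable commutative $\S$-algebra model (or simply rationalize the diagram $\D$ objectwise). What the paper's route buys is brevity and independence from the heavy machinery; what yours buys is uniformity, exhibiting the rational statement as a literal instance of the relative theorem. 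Both are acceptable.
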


\begin{proof}
By proposition (\ref{rat-slice}) we have $s_0(\LQ)=\LQ$.
The claim follows from the decomposition $\MGL_\bQ \cong \LQ[x_1,x_2,\ldots]$.
\end{proof}

\begin{corollary}
Suppose $S$ is regular. Then $s_0(\unit
_\bQ)=s_0(\unit)_\bQ=\LQ$.
\end{corollary}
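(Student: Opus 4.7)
The plan is to chain together two observations: first, that rationalization commutes with the slice functors, and second, that the unit-to-$\MGL$ statement of Corollary (\ref{s0-isom}) combined with the previous corollary identifies $s_0(\MGL)_\bQ$ with $\LQ$.

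First I would observe that rationalization, i.e.\ smashing with the Moore spectrum $\unit_\bQ$, can be presented as a sequential homotopy colimit along multiplication by integers. Since by Corollary (\ref{f-stab-pres}) the functors $f_i$ preserve homotopy colimits, the cofibers $s_i = \cofib(f_{i+1} \to f_i)$ do as well, so $s_0(X_\bQ) \cong s_0(X)_\bQ$ for any $X \in \SH(S)$. Applying this to $X = \unit$ yields $s_0(\unit_\bQ) = s_0(\unit)_\bQ$, which is one of the two claimed equalities.

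Next I would apply Corollary (\ref{s0-isom}), which gives $s_0(\unit) \cong s_0(\MGL)$; rationalizing gives $s_0(\unit)_\bQ \cong s_0(\MGL)_\bQ$. Using the commutation of $s_0$ with rationalization once more, $s_0(\MGL)_\bQ \cong s_0(\MGL_\bQ)$. Finally, the preceding corollary, evaluated at $i=0$ (using $\MU_0 \cong \Z$), identifies $s_0(\MGL_\bQ) \cong \LQ$. Concatenating these isomorphisms gives $s_0(\unit_\bQ) = s_0(\unit)_\bQ = \LQ$, as required.

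The argument is essentially formal given the preceding material; the only point needing care is the commutation of $s_0$ with rationalization, and this reduces cleanly to Corollary (\ref{f-stab-pres}) once rationalization is exhibited as a sequential homotopy colimit. Note that the regularity hypothesis on $S$ enters only through the previous corollary, which relies on Proposition (\ref{rat-slice}).
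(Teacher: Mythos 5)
Your proof is correct and follows essentially the same route as the paper, which simply cites Corollary (\ref{s0-isom}) together with the preceding corollary; you have merely filled in the (valid) justification that $s_0$ commutes with rationalization via Corollary (\ref{f-stab-pres}).
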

\begin{proof}
This follows now from Corollary (\ref{s0-isom}).
\end{proof}

\bibliographystyle{plain}
\bibliography{slmgl}

\begin{center}
Fakult{\"a}t f{\"u}r Mathematik, Universit{\"a}t Regensburg, Germany.\\
e-mail: Markus.Spitzweck@mathematik.uni-regensburg.de
\end{center}

\end{document}